\numberwithin{equation}{section}
\let\cal\mathcal
\def\Sym{\mathop{\text{\upshape{Sym}}}\nolimits}
\def\diag{\operatorname {diag}}
\newtheorem{lemma}{Lemma}[section]
\newtheorem{proposition}[lemma]{Proposition}
\newtheorem{theorem}[lemma]{Theorem}
\newtheorem{corollary}[lemma]{Corollary}
\newtheorem{fact}[lemma]{Fact}
\theoremstyle{definition}
\newtheorem{example}[lemma]{Example}
\newtheorem{definition}[lemma]{Definition}
\newtheorem{notation}[lemma]{Notation}
\newtheorem*{prooftheorem1}{Proof of Theorem \ref{theorem1}}
\newtheorem*{prooftheorem3}{Proof of Theorem \ref{theorem3}}
\newtheorem*{sketchofproof}{Sketch of the proof}
\theoremstyle{remark}
\newtheorem{remark}[lemma]{Remark}
\newcommand{\Acal}{\mbox{$\cal A$}}
\newcommand{\Dcal}{\mbox{$\cal D$}}
\newcommand{\Lcal}{\mbox{$\cal L$}}
\newcommand{\Ncal}{\mbox{$\cal N$}}
\newcommand{\Ocal}{\mbox{$\cal O$}}
\newcommand{\Rcal}{\mbox{$\cal R$}}
\newdimen\uboxsep \uboxsep=1ex
\def\uboxn#1{\vtop to 0pt{\hrule height 0pt depth 0pt\vskip\uboxsep
\hbox to 0pt{\hss #1\hss}\vss}}
\def\uboxs#1{\vbox to 0pt{\vss\hbox to 0pt{\hss #1\hss}
\vskip\uboxsep\hrule height 0pt depth 0pt}}
\title[PBW algebras, YBE and AS regularity]{Quadratic algebras, Yang-Baxter equation,
and Artin-Schelter regularity} \keywords{Yang-Baxter, Artin-Schelter regular rings, Quadratic algebras, Quantum groups}
\subjclass{Primary 81R50, 16W50, 16S36, 16S37}
\thanks{The author was partially supported by the
 The Abdus Salam International Centre
   for Theoretical Physics (ICTP), Trieste}
\author{Tatiana Gateva-Ivanova}
\address{Institute of Mathematics and Informatics\\
Bulgarian Academy of Sciences\\
Sofia 1113, Bulgaria\\
and
American University in Bulgaria\\
2700 Blagoevgrad, Bulgaria }
\email{tatianagateva@yahoo.com, tatyana@aubg.bg}
\begin{document}
\date{\today}
\begin{abstract}
We study quadratic algebras over a field $\textbf{k}$. We show
that an $n$-generated PBW algebra $A$ has finite global dimension
and polynomial growth \emph{iff} its Hilbert series is $H_A(z)= 1
/(1-z)^n$. Surprising amount can be said when the algebra $A$ has
\emph{quantum binomial relations}, that is the defining relations
are nondegenerate square-free binomials $xy-c_{xy}zt$ with
non-zero coefficients $c_{xy}\in \textbf{k}$.  In this case
various good algebraic and homological properties are closely
related. The main result shows that for an $n$-generated quantum
binomial algebra $A$ the following conditions are equivalent: (i)
A is a PBW algebra with finite global dimension; (ii) A is PBW and
has polynomial growth; (iii) A is an Artin-Schelter regular PBW
algebra; (iv) $A$ is a Yang-Baxter algebra; (v) $H_A(z)=
1/(1-z)^n;$ (vi) The dual $A^{!}$ is a quantum Grassman algebra;
(vii) A is a binomial skew polynomial ring. So for quantum
binomial algebras the problem of classification of Artin-Schelter
regular PBW algebras of global dimension $n$ is equivalent to the
classification of square-free set-theoretic solutions of the
Yang-Baxter equation $(X,r)$, on sets $X$ of order $n$.
\end{abstract}
\maketitle
\section{Introduction}
We work with quadratic algebras $A$ over a ground field
$\textbf{k}$. Following a classical tradition (and recent trend),
we take a combinatorial approach to study $A$. The properties of
$A$ will be read off a presentation $A= \textbf{k} \langle
X\rangle /(\Re)$, where $X$ is a finite set of generators of
degree $1$, $|X|=n,$ $\textbf{k}\langle X\rangle$ is the unitary
free associative algebra generated by $X$, and $(\Re)$ is the
two-sided ideal of relations, generated by a {\em finite} set
$\Re$ of homogeneous polynomials of degree two. Clearly $A$ is a
connected graded $\textbf{k}$-algebra (naturally graded by length)
 $A=\bigoplus_{i\ge0}A_i$, where
$A_0=\textbf{k}$, $A$ is generated
by $A_1=Span_{\textbf{k}}X,$
so each $A_i$ is finite dimensional.

A quadratic algebra $A$ is  \emph{a PBW algebra} if  there exists
an enumeration of $X,$ $X= \{x_1, \cdots x_n\}$ such that the
quadratic relations $\Re$ form a (noncommutative) Gr\"{o}bner
basis with respect to the degree-lexicographic ordering on
$\langle X\rangle$ induced from $x_1 < x_2< \cdots <x_n$. In this
case   the set of normal monomials (mod $\Re$) forms a
$\textbf{k}$-basis of $A$ called a \emph{PBW basis}
 and $x_1,\cdots, x_n$ (taken exactly with this enumeration) are called \emph{ PBW-generators of $A$}.
 The notion of a PBW algebra was introduced by Priddy, \cite{priddy}, his
 \emph{PBW basis}  is a generalization of the classical Poincar\'{e}-Birkhoff-Witt basis for the universal enveloping of a  finite dimensional Lie algebra.
 PBW algebras form an important class of Koszul algebras.
  The interested reader can find information on PBW algebras and more references in \cite{PP}.
One of the central problems that we consider is
\[
\text{\emph{the classification of Artin-Schelter regular PBW
algebras.}}
\]
It is far from its final resolution.
The first question to be asked is
\[\begin{array}{l}
\text{\emph{What can be said about PBW algebras with polynomial growth and finite global}}\\
\text{\emph{dimension? }}
\end{array}
\]
We find that, surprisingly, the  class $\mathfrak{C}_n$ of
$n$-generated PBW algebras with polynomial growth and finite
global dimension is determined uniquely by its Hilbert series,
this is in section \ref{section_growthgldim}.

 \begin{theorem}
\label{theorem1} Let $A  = \textbf{k}\langle X  \rangle /(\Re)$ be
a quadratic PBW algebra, where $X=\{x_1, x_2, \cdots, x_n\}$ is a
set of PBW generators.
 The following are equivalent
\begin{enumerate}
\item
\label{t1gldimpolgr}
$A$ has polynomial growth and finite  global dimension.
 \item
\label{t1gldim}
$A$ has  exactly $\binom{n}{2}$ relations  and finite  global dimension.
 \item
 \label{t1hilbser}
The Hilbert series of $A$ is
\[
H_A(z)= \frac{1}{(1-z)^n}.
\]
 \item
\label{t1perm}
There exists a permutation
$y_1, \cdots, y_n$ of $x_1 \cdots x_n$, such that the set
\begin{equation}
\label{ncal} \Ncal = \{y_1^{\alpha_{1}}y_2^{\alpha_{2}}\cdots
y_n^{\alpha_{n}}\mid \alpha_{i} \geq 0 \text{ for } 1 \leq i
\leq n\}
\end{equation}
is a $\textbf{k}$-basis of $A.$
\end{enumerate}
Furthermore the class $\mathfrak{C}_n$ of all $n$-generated PBW
algebras with polynomial growth and finite global dimension
contains a unique (up to isomorphism) monomial algebra:
 \[
 A^{0}= \langle x_1, \cdots, x_n \rangle/(x_jx_i \mid 1 \leq i < j \leq n).
 \]
\end{theorem}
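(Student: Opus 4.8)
The plan is to prove the cycle of implications
\eqref{t1gldimpolgr} $\Rightarrow$ \eqref{t1gldim} $\Rightarrow$ \eqref{t1hilbser} $\Rightarrow$ \eqref{t1perm} $\Rightarrow$ \eqref{t1gldimpolgr}, and then treat the uniqueness statement separately. The central tool throughout is the associated monomial algebra $A^0 = \textbf{k}\langle X\rangle/(\mathbf{W})$, where $\mathbf{W}$ is the set of leading monomials of the Gröbner basis $\Re$; since $A$ is PBW, $\Re$ \emph{is} a Gröbner basis, so $A$ and $A^0$ share the same PBW basis of normal monomials and hence $H_A(z)=H_{A^0}(z)$. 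Moreover, a standard fact for monomial (hence for PBW) algebras is that the global dimension and the Koszulity are controlled combinatorially, and $\mathrm{gl.dim}\, A \le \mathrm{gl.dim}\, A^0$; being PBW, $A$ is Koszul, so $H_A(z)H_{A^!}(-z)=1$, which converts statements about $H_A$ into statements about the quadratic dual.

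For \eqref{t1gldimpolgr} $\Rightarrow$ \eqref{t1gldim}: finite global dimension for a Koszul algebra forces $A^!$ to be finite-dimensional, so $H_{A^!}(z)$ is a polynomial $P(z)$ with $P(0)=1$, $P(z)=1+nz+\cdots$; then $H_A(z)=1/P(-z)$. Polynomial growth means $\dim A_i$ grows polynomially, i.e. $H_A(z)$ has its only pole on $|z|\le 1$ at $z=1$. Writing $1/P(-z)$ and analyzing the roots of $P(-z)$, polynomial growth forces $P(-z)=(1-z)^d$ for some $d$; since the coefficient of $z$ in $P$ is the number of quadratic relations, having $\binom n2$ relations is equivalent to $d=n$, and we must rule out $d<n$ (which would make $\dim A_i$ eventually zero or bounded — contradicting that $A$ is generated in degree $1$ by $n$ elements with a PBW basis, so $\dim A_1=n$ and $\dim A_i$ is nondecreasing once it includes all pure powers). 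This pins down $H_A(z)=1/(1-z)^n$, giving \eqref{t1hilbser} at the same time; the number-of-relations count gives \eqref{t1gldim}. The implication \eqref{t1gldim} $\Rightarrow$ \eqref{t1hilbser} is the same Koszul-duality computation: $\binom n2$ relations plus finite global dimension forces $H_{A^!}(z)=(1+z)^n$ by the root analysis, hence $H_A(z)=1/(1-z)^n$.

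The implication \eqref{t1hilbser} $\Rightarrow$ \eqref{t1perm} is the combinatorial heart of the argument and I expect it to be the main obstacle. Knowing $H_A(z)=1/(1-z)^n$ tells us that the number of normal (PBW) monomials of each degree $d$ equals $\binom{n+d-1}{d}$, the number of commutative monomials of degree $d$ in $n$ variables. One shows, by induction on the ordering of $X$ and a counting/exchange argument on the set $\mathbf{W}$ of obstructions (leading terms of $\Re$), that $\mathbf{W}$ must be, up to the enumeration, exactly $\{x_jx_i \mid i<j\}$ after a suitable \emph{reordering} $y_1,\dots,y_n$ of the generators: any normal word is then a nondecreasing word in the $y$'s, which is precisely the claim that $\Ncal$ is a $\textbf k$-basis. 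Concretely, since there are exactly $n^2-\binom n2=\binom{n+1}{2}$ normal monomials of degree $2$, there are exactly $\binom n2$ obstruction monomials of degree $2$; a pigeonhole/graph argument on the "relation digraph" on vertex set $X$ (an edge $i\to j$ when $x_ix_j$ is normal) shows this digraph must be a transitive tournament, whose linear order is the desired permutation $y_1<\cdots<y_n$. The delicate point is ensuring that no higher-degree obstructions are needed and that the degree-$2$ obstruction set is forced to have this rigid tournament structure purely from the Hilbert series — this is where one must use that $\Re$ is a Gröbner basis (no overlap ambiguities survive) together with the exact dimension count in every degree, not just degree $2$.

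Finally \eqref{t1perm} $\Rightarrow$ \eqref{t1gldimpolgr} is immediate: if $\Ncal$ as in \eqref{ncal} is a $\textbf k$-basis, then $H_A(z)=\sum_d \binom{n+d-1}{d} z^d = 1/(1-z)^n$, so $\dim A_d$ is a polynomial in $d$ of degree $n-1$ (polynomial growth), and Koszulity of the PBW algebra $A$ together with $H_{A^!}(z)=(1+z)^n$ being a polynomial gives $\mathrm{gl.dim}\,A=n<\infty$. For the uniqueness of the monomial algebra in $\mathfrak C_n$: any monomial algebra $A$ in the class is determined by its obstruction set $\mathbf W$, and by the equivalence just proved (applied to $A$, which equals its own associated monomial algebra) $\mathbf W$ must be $\{y_jy_i\mid i<j\}$ for some enumeration $y_1,\dots,y_n$ of $X$; relabelling the generators gives an isomorphism with $A^0=\langle x_1,\dots,x_n\rangle/(x_jx_i\mid i<j)$, and conversely $A^0$ visibly lies in $\mathfrak C_n$ since its PBW basis is $\{x_1^{\alpha_1}\cdots x_n^{\alpha_n}\}$.
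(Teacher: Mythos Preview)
Your reduction to the associated monomial algebra $A^{0}$ and the identification of the ``tournament'' structure on the obstruction set are exactly the right ideas, and your implications \eqref{t1hilbser}$\Rightarrow$\eqref{t1perm} (sketched) and \eqref{t1perm}$\Rightarrow$\eqref{t1gldimpolgr} match the paper's approach. The paper carries out \eqref{t1hilbser}$\Rightarrow$\eqref{t1perm} via two dual directed graphs on vertex set $X$: the graph of normal words $\Gamma_{\mathbf N}$ (Ufnarovski) controls growth, its dual $\Gamma_{\mathbf W}$ controls global dimension through Anick's chains, and the Hilbert series hypothesis forces $\Gamma_{\mathbf W}$ to be an acyclic tournament, which after relabelling gives the permutation $y_1,\dots,y_n$. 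Your ``relation digraph'' is precisely $\Gamma_{\mathbf N}$, so that part is on the same track.

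Where your argument genuinely breaks is the passage \eqref{t1gldimpolgr}$\Rightarrow$\eqref{t1hilbser} via Koszul duality and root analysis. The assertion that polynomial growth of $H_A(z)=1/P(-z)$ forces $P(-z)=(1-z)^d$ is false as a statement about power series: polynomial growth only gives radius of convergence $\ge 1$, \emph{not} that $z=1$ is the unique singularity on $|z|=1$. For instance $P(-z)=(1-z)^2(1+z+z^2)=(1-z)(1-z^3)$ has $1/P(-z)=\sum_m (\lfloor m/3\rfloor+1)z^m$, which has nonnegative integer coefficients and linear growth, yet has poles at the primitive cube roots of unity. (Here $P(z)=(1+z)(1+z^3)=1+z+z^3+z^4$ even has nonnegative coefficients.) Such a $P$ is not realized by a one-generator PBW algebra, but that is exactly the point: you cannot reach the conclusion from the numerical constraints on $H_A$ and $H_{A^!}$ alone; you must use the PBW combinatorics. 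The paper does this by working entirely on the monomial side: it invokes that for PBW algebras Anick's resolution is minimal, so $\operatorname{gl.dim}A=\operatorname{gl.dim}A^{0}$, and then proves a purely combinatorial theorem about quadratic monomial algebras (finite global dimension $\Leftrightarrow$ $\Gamma_{\mathbf W}$ acyclic; polynomial growth $\Leftrightarrow$ $\Gamma_{\mathbf N}$ has no intersecting cycles), showing that the conjunction forces $\Gamma_{\mathbf W}$ to be an acyclic tournament and hence $H_{A^0}(z)=1/(1-z)^n$. If you want to keep your Koszul-duality framing, you would still need this combinatorial input (or Anick's theorem on monomial algebras of finite global dimension) to rule out the extra roots of unity; the root analysis by itself cannot close the gap.

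A smaller point: in your \eqref{t1hilbser}$\Rightarrow$\eqref{t1perm} step, knowing $|\mathbf W|=\binom{n}{2}$ does not by itself exclude loops $x_ix_i\in\mathbf W$; in the paper this is obtained from the degree of polynomial growth being exactly $n$, which forces $\Gamma_{\mathbf N}$ to contain a loop at every vertex. You flagged this as ``the delicate point'', and the paper's graph argument is precisely what supplies it.
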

Note that $y_1, y_2, \cdots, y_n$ is possibly a "new" enumeration
of $X$, which  induces a degree-lexicographic ordering $\prec$ on
$\langle X \rangle$ (with $y_1 \prec y_2\prec \cdots \prec y_n$)
different from the original ordering. The defining relations
remain the same, but their leading terms w.r.t. $\prec$ may be
different from the original ones, and $y_1, y_2, \cdots, y_n$ are
not necessarily PBW generators of $A$. In the terminology  of
Gr\"{o}bner bases, $\Ncal$ is not necessarily \emph{a normal
basis} of $A$ w.r.t. $\prec$.

A class of PBW Arin Schelter regular rings of arbitrarily high
global dimension $n$,  were introduced and studied in \cite{T96},
\cite{TM}, \cite{T04}, \cite{T04s}. These are \emph{the binomial
skew-polynomial rings}. It was shown in \cite{TM} that they are
also closely related to the set-theoretic solutions of the
Yang-Baxter equation. So  we consider the so-called \emph{quantum
binomial algebras} introduced and studied in \cite{T04s},
\cite{TSh08}. These are  quadratic algebras (not necessarily PBW)
with square-free non-degenerate binomial relations, see Definition
\ref{quantumbinomialalg_def}. The second question that we ask in
the paper is
\[\begin{array}{l}
\text{\emph{Which are the PBW Artin Schelter regular algebras in the class of}}\\
\text{\emph{quantum binomial algebras?}}\\
\end{array}
\]
We prove that each quantum binomial PBW algebra with finite global
dimension is a Yang-Baxter algebra, and therefore a binomial
skew-polynomial ring. This implies that \emph{in the class of
quantum binomial algebras} the three notions: an Artin-Schelter
regular PBW algebra, a binomial skew-polynomial ring, and a
Yang-Baxter algebra (in the sense of Manin) are equivalent.
  The following result is proven in Section \ref{section_AS_YBE}.
\begin{theorem}
\label{theorem3} Let $A  = \textbf{k}\langle X  \rangle /(\Re)$ be
a quantum binomial algebra.
The following conditions are equivalent.
 \begin{enumerate}
\item \label{theorem31} $A$ is a PBW algebra with  finite global
dimension. \item \label{theorem32} $A$ is a PBW algebra with
polynomial growth. \item \label{theorem33}
 $A$ is  an Artin-Schelter regular PBW algebra.
\item
 \label{theorem34}
 $A$ is a Yang-Baxter algebra, that is the set of relations $\Re$ defines
 canonically  a solution of the
Yang-Baxter equation.
\item
\label{theorem35} $A$ is a binomial skew polynomial ring, with
respect to some appropriate enumeration of $X.$
 \item
\label{theorem36}
\[\dim_{\textbf{k}} A_3 = \binom{n+2}{3},\quad\text{or equivalently}\quad\dim_{\textbf{k}} A^{!}_3 = \binom{n}{3}.\]
 \item
\label{theorem37}
\[H_A(z)= \frac{1}{(1-z)^n}\]
 \item
\label{theorem38}
The Koszul dual $A^{!}$ is a quantum Grassman algebra.
\end{enumerate}
Each of these condition implies that $A$ is Koszul and a Noetherian domain.
\end{theorem}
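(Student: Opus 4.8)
The plan is to establish the cycle of equivalences by combining Theorem \ref{theorem1} (which already handles the purely PBW-theoretic implications) with the combinatorial theory of quantum binomial relations and set-theoretic solutions of the Yang--Baxter equation. First I would observe that for a quantum binomial algebra the PBW property forces the number of relations to be exactly $\binom{n}{2}$: square-free nondegenerate binomial relations $xy - c_{xy}zt$ come in pairs matching the $\binom{n}{2}$ unordered pairs of distinct generators, so $|\Re| = \binom{n}{2}$ automatically. Hence Theorem \ref{theorem1} applies verbatim and gives at once the equivalence of \eqref{theorem31}, \eqref{theorem32}, \eqref{theorem37}, and (via the Hilbert series computation and Koszulity of PBW algebras) \eqref{theorem36} and \eqref{theorem33}; recall that for a Koszul algebra $H_A(z)H_{A^!}(-z) = 1$, so $H_A(z) = 1/(1-z)^n$ is equivalent to $H_{A^!}(z) = (1+z)^n$, which is equivalent to $\dim A^!_3 = \binom{n}{3}$, and equivalent to $A^!$ having the Hilbert series of a Grassmann algebra. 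The Artin--Schelter regularity in \eqref{theorem33} follows because a PBW (hence Koszul) algebra with $H_A(z) = 1/(1-z)^n$ has global dimension $n$, is Gorenstein (the Koszul dual being Frobenius of the right type), and has the required finite GK-dimension.

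Next I would bring in the Yang--Baxter structure. The core algebraic input is that a square-free nondegenerate quantum binomial algebra whose relations $\Re$ define an associative normal form — equivalently, one for which the overlap ambiguities $xyz$ resolve, i.e. $\dim A_3 = \binom{n+2}{3}$ — is precisely one whose associated bijective map $r\colon X\times X \to X\times X$ (reading $r(x,y) = (z,t)$ off the relation $xy - c_{xy}zt$, extended by $r(x,x) = (x,x)$) satisfies the braid relation $r_{12}r_{23}r_{12} = r_{23}r_{12}r_{23}$ on $X^3$. This is the content of the nondegeneracy-plus-associativity analysis from \cite{T04s}, \cite{TSh08}: the diamond-lemma condition on the $n^3$ (really, the relevant $\binom{n}{3}$ and $\binom{n}{2}$ types of) monomials $x_ix_jx_k$ is literally the set-theoretic Yang--Baxter equation for $r$ together with the coherence of the coefficients $c_{xy}$ (a $2$-cocycle-type condition). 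This yields \eqref{theorem36} $\Longleftrightarrow$ \eqref{theorem34}. Since a Yang--Baxter algebra in Manin's sense with such an $r$ automatically has a PBW basis — the normal monomials being the $\Ncal$-type basis of Theorem \ref{theorem1}\eqref{t1perm} after reordering — one gets \eqref{theorem34} $\Longrightarrow$ \eqref{theorem37}, closing the loop with the earlier block.

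The implication \eqref{theorem35} $\Rightarrow$ everything is immediate since a binomial skew-polynomial ring is by construction a PBW algebra with $\binom{n}{2}$ relations in skew-polynomial normal form, so Theorem \ref{theorem1} applies. For the converse, \eqref{theorem31}--\eqref{theorem34} $\Rightarrow$ \eqref{theorem35}, I would argue as follows: given the PBW property with the right Hilbert series, Theorem \ref{theorem1}\eqref{t1perm} supplies an enumeration $y_1, \dots, y_n$ in which $\Ncal = \{y_1^{\alpha_1}\cdots y_n^{\alpha_n}\}$ is a basis; one then checks that under this enumeration the square-free binomial relations automatically take skew-polynomial form $y_jy_i = c_{ij}y_{i'}y_{j'}$ with $i' \le j'$ and $i' < j$, using nondegeneracy of $r$ to rule out the forbidden configurations (this is where the square-free and nondegenerate hypotheses do real work, excluding relations like $y_jy_i = c y_iy_j$ that would collapse dimension, or cyclic patterns obstructing the ordering). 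Finally, the closing sentence — Koszulity and the domain/Noetherian property — follows because PBW $\Rightarrow$ Koszul (Priddy), and a binomial skew-polynomial ring is an iterated Ore extension of $\textbf{k}$, hence a Noetherian domain.

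I expect the main obstacle to be the precise bookkeeping in the step \eqref{theorem36} $\Leftrightarrow$ \eqref{theorem34}: one must verify that the third-degree diamond-lemma conditions, after accounting for the square-free and nondegeneracy constraints, reduce \emph{exactly} to the set-theoretic YBE for $r$ (no more, no less) together with the cocycle condition on the $c_{xy}$, and that bijectivity of $r$ is not an extra assumption but forced by nondegeneracy. Getting this equivalence sharp — rather than just one direction, or up to an auxiliary hypothesis — is the technical heart; the remaining implications are then essentially assembly of Theorem \ref{theorem1} with standard Koszul-duality and Ore-extension facts.
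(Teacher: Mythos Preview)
Your overall architecture is close to the paper's, and you correctly identify the step \eqref{theorem36} $\Leftrightarrow$ \eqref{theorem34} as the technical heart. But there is a real gap in your route to \eqref{theorem35}. You propose to use Theorem~\ref{theorem1}\eqref{t1perm} to obtain an enumeration $y_1,\dots,y_n$ with ordered-monomial basis $\Ncal$, and then ``check'' that the relations take skew-polynomial form in this enumeration. This does not work: as the paper explicitly warns immediately after Theorem~\ref{theorem1}, the $y_i$ are \emph{not} in general PBW generators, and $\Ncal$ is not in general the normal basis for the deg-lex order they induce. So nothing guarantees that the leading monomials of $\Re$ under $\prec$ are the $y_jy_i$ with $j>i$, and without that you cannot read off conditions (b)--(d) of Definition~\ref{binomialringdef}. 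Your parenthetical about nondegeneracy ``ruling out forbidden configurations'' is not enough to repair this.

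The paper's actual route to \eqref{theorem35} is substantively different and is where Section~\ref{section_combinatorics} earns its keep. One first shows, by the $\Dcal$-orbit counting of Proposition~\ref{proposition1} and Proposition~\ref{proposition2}, that $\dim A_3=\binom{n+2}{3}$ forces $\dim\Acal_3=\binom{n+2}{3}$ and hence that the underlying quantum binomial set $(X,r)$ is a \emph{symmetric set} (Corollary~\ref{cor1_prop2}). Only then does one invoke the nontrivial ordering theorem \cite[Theorem~2.26]{T04}, which says that for a square-free symmetric set there exists an enumeration of $X$ making $\Acal(\textbf{k},X,r)$ a binomial skew-polynomial ring; Lemma~\ref{skewtyperel&dimA3lemma} then transfers this to $A$ with its coefficients. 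This is packaged as Theorem~\ref{theorem2}, and it is this result---not Theorem~\ref{theorem1}\eqref{t1perm}---that supplies the correct enumeration. A secondary point: be careful that conditions \eqref{theorem36}--\eqref{theorem38} do not assume PBW, so your appeal to Theorem~\ref{theorem1} for ``the equivalence of \eqref{theorem31}, \eqref{theorem32}, \eqref{theorem37}'' only works once PBW has been established from the other side, which again goes through \eqref{theorem35}.
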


It follows from Theorem \ref{theorem3} that
\[
\begin{array}{l}
\text{\emph{The problem of classification of Artin Schelter regular PBW algebras }} \\
\text{\emph{with quantum binomial relations
 and global dimension $n$ is equivalent to}}\\
 \text{\emph{the classification of square-free set-theoretic solutions of YBE, $\;(X,r),$
 }}\\
 \text{\emph{on sets $X$ of order  $n$.}}
\end{array}
\]
Even  under these strong restrictions on the shape of the relations, the problem is  highly nontrivial.
However for reasonably small $n$ (say $n\leq 10$) the square-free solutions of YBE $(X,r)$ are known.
A possible classification for general $n$ can be based on the so called \emph{multipermutation level} of  the solutions, see \cite{TP}.

The paper is organized as follows.

In section \ref{section_preliminaries} are recalled basic
definitions, and some facts used throughout the paper. In section
\ref{section_growthgldim} we study the general case of PBW
algebras with finite global dimension and polynomial growth and
prove Theorem \ref{theorem1}. The approach is combinatorial. To
each PBW algebra $A$ we associate two finite oriented graphs. The
first is \emph{the graph  of normal words } $\Gamma_{\textbf{N}}$
(this is a particular case of the Ufnarovski graph
\cite{Ufnarovski}), it determines the growth and the Hilbert
series of $A$. The second is \emph{the graph of obstructions,}
$\Gamma_{\textbf{W}}$, dual to $\Gamma_{\textbf{N}}$.  We define
it via the set of obstructions (in the sense of Anick,
\cite{Anick85}, \cite{Anick86}),  it gives a precise information
about \emph{the global dimension} of the algebra $A$. We prove
that all  algebras in the class $\mathfrak{C}_n$ of $n$- generated
PBW algebras determine a unique (up to isomorphism) graph of
obstructions $\Gamma_W$, which is the complete oriented graph
$K_n$ with no cycles. The two graphs play important role in the
proof of Theorem \ref{theorem1}. They  can be used whenever PBW
algebras are studied.

In section \ref{section_combinatorics} we find some interesting combinatorial results
on quantum binomial  sets $(X,r)$ and the corresponding quadratic algebra $\Acal=\Acal(\textbf{k}, X,r)$. We study the action of  the infinite Dihedral group, $\Dcal= \Dcal(r)$,
associated with $r$ on $X^3$ and find some \emph{counting formulae} for the $\Dcal$-orbits.
These are used to show that $(X,r)$ is a set-theoretic solution of the Yang-Baxter equation
\emph{iff} $\dim_{\textbf{k}}\Acal_3=\binom{n+2}{3}$.

In section \ref{section_AS_YBE}
we prove Theorem \ref{theorem3}.
  The proof involves the results of sections \ref{section_growthgldim}, \ref{section_combinatorics},
and  results  on binomial skew polynomial rings and set-theoretic solutions of YBE from
\cite{TM}, \cite{T04}, \cite{T04s}, \cite{TSh08}.

\section{Preliminaries - some  definitions and facts}
\label{section_preliminaries}

In this section we recall basic
notions and results which will be used in the paper. This paper is a natural
continuation of \cite{T04s}.
 We shall use the terminology,  notation and  results from
 our previous works
\cite{T96, TM, T04s,  T04, TSh08}.
 The reader  acquainted with these can proceed  to the next section.

A connected graded algebra is called \emph{Artin-Schelter regular} (or \emph{AS regular}) if
\begin{enumerate}
\item[(i)]
$A$ has {\em finite global dimension\/} $d$, that is, each graded
$A$-module has a free resolution of length at most $d$.
\item[(ii)]
$A$ has \emph {finite Gelfand-Kirillov dimension}, meaning that
the integer-valued function $i\mapsto\dim_{\textbf{k}}A_i$ is
bounded by a polynomial in $i$.
\item[(iii)]
$A$ is \emph{Gorenstein}, that is, $Ext^i_A(\textbf{k},A)=0$ for
$i\ne d$ and $Ext^d_A(\textbf{k},A)\cong \textbf{k}$.
\end{enumerate}

AS regular algebras were introduced  and studied first in
\cite{AS}, \cite{ATV1}, \cite{ATV2}. When $d\le3$ all regular
algebras are classified. Since then AS regular algebras and their
geometry are intensively studied. The problem of classification of
regular rings is difficult and remains open even for regular rings
of global dimension $4$. The study of Artin-Schelter regular
rings, their classification, and finding new classes of such rings
is one of the basic problems for noncommutative geometry. Numerous
works on this topic appeared during the last two decades, see for
example \cite{ATV1, ATV2}, \cite{BSM},
\cite{Levasseur},
\cite{Michel-Tate}, \cite{LPWZ}, et all.

A class of PBW AS regular algebras of global dimension $n$ was
introduced and studied in \cite{T96}, \cite{TM} \cite{T04},
\cite{T04s}. These are \emph{the binomial skew-polynomial rings}.
\begin{definition}
\label{binomialringdef}
\cite{T96} A {\em binomial skew polynomial ring\/} is a quadratic algebra
 $A=\textbf{k} \langle x_1, \cdots , x_n\rangle/(\Re)$ with
precisely $\binom{n}{2}$ defining relations \[\Re=\{x_{j}x_{i} -
c_{ij}x_{i^\prime}x_{j^\prime}\}_{1\leq i<j\leq n}\] such that
\begin{enumerate}
\item[(a)] \label{binomialringa} $c_{ij} \in \textbf{k}^{\times}$.

\item[(b)] \label{binomialringb} For every pair $i, j, \; 1\leq
i<j\leq n$, the relation $x_{j}x_{i} - c_{ij}x_{i'}x_{j'}\in \Re,$
satisfies $j
> i^{\prime}$, $i^{\prime} < j^{\prime}$;
\item[(c)] \label{binomialringc} Every ordered monomial $x_ix_j,$
with $1 \leq i < j \leq n$ occurs in  the right hand side of some
relation in $\Re$; \item[(d)] \label{binomialringd} $\Re$ is the
{\it reduced Gr\"obner basis\/} of the two-sided ideal $(\Re)$,
with respect to the order $\prec$ on $\langle X \rangle$,  or
equivalently the ambiguities $x_kx_jx_i,$ with $k>j>i$ do not give
rise to new relations in $A.$
\end{enumerate}

We say that $\Re$ are \emph{relations of skew-polynomial type} if
conditions \ref{binomialringdef} (a), (b) and (c) are satisfied
(we do not assume (d)).
\end{definition}

By \cite{Bergman} condition \ref{binomialringdef} (d) may be
rephrased by saying that  \emph{the set of ordered monomials}
\[
 \Ncal _0 = \{x_{1}^{\alpha_{1}}\cdots
x_{n}^{\alpha_{n}}\mid \alpha_{n} \geq 0 \text{ for } 1 \leq i
\leq n\}
\]
is a $\textbf{k}$-basis of $A$.

\begin{remark}
In the terminology of this paper \emph{a binomial skew polynomial
ring} is a quadratic PBW algebra $A$ with PBW generators $x_1
\cdots, x_n$ and relations of skew-polynomial type.
\end{remark}
More generally, we will consider a class of quadratic algebras
with binomial relations, called \emph{quantum binomial algebras},
these are not necessarily PBW algebras.

We need to recall first the notions of a quadratic set  and the associated with it quadratic algebras.
\begin{definition}
\label{quadraticsetdef}
Let $X$ be a
nonempty set and let $r: X\times X \longrightarrow X\times X$ be a
bijective map. In this case  we shall use notation $(X,r)$ and
refer to it as a \emph{quadratic set}.
We present the
image of $(x,y)$ under $r$ as
\begin{equation}
\label{r} r(x,y)=({}^xy,x^{y}).
\end{equation}
The formula (\ref{r}) defines a ``left action" $\Lcal: X\times X
\longrightarrow X,$ and a ``right action" $\Rcal: X\times X
\longrightarrow X,$ on $X$ as:
\begin{equation}
\label{LcalRcal} \Lcal_x(y)={}^xy, \quad \Rcal_y(x)= x^{y},
\end{equation}
for all $ x, y \in X.$
$r$ is {\em nondegenerate} if  the maps $\Lcal_x$ and  $\Rcal_x$ are
bijective for each $x\in X$.
$r$ is \emph{involutive} if $r^2 = id_{X \times X}$.

As a notational tool, we  shall often
identify the sets $X^{\times k}$ of ordered $k$-tuples, $k \geq 2,$  and $X^k,$ the set of
all monomials of length $k$ in the free monoid $\langle
X\rangle.$
\end{definition}

As in \cite{T04, TSh07, TSh08, TSh0806, TP}, to each quadratic set
$(X,r)$ we associate canonically algebraic objects (see
Definition~\ref{associatedobjects}) generated by $X$ and with
quadratic defining relations $\Re_0 =\Re_0(r)$ naturally determined as
\begin{equation}
\label{defrelations}
\begin{array}{ll}
xy=y^{\prime} x^{\prime}\in \Re_0(r),&\text{whenever} \\
 r(x,y) = (y^{\prime}, x^{\prime}) & \text{and} \quad (x,y) \neq (y^{\prime}, x^{\prime})\;\;\text{hold in}\;\;X \times X.
\end{array}
\end{equation}
We can tell the precise number of defining relations,  whenever
$(X,r)$  is nondegenerate and involutive, with $\mid X \mid =n$.
In this case the nondegeneracy implies that
$\Re(r)$,  consists of precisely $\binom{n}{2}$ quadratic
 relations (see \cite{T10} Proposition 2.3 ).

\begin{definition} \cite{T04, TSh08}
\label{associatedobjects} Assume that $r:X^2 \longrightarrow X^2$
is a bijective map.

(i) The monoid
\[
S =S(X, r) = \langle X ; \; \Re_0(r) \rangle,
\]
 with a
set of generators $X$ and a set of defining relations $ \Re(r),$
is called \emph{the monoid associated with $(X, r)$}.
 The \emph{group $G=G(X, r)$ associated with} $(X, r)$ is
defined analogously.

(ii) For arbitrary fixed field $\textbf{k}$, \emph{the
$\textbf{k}$-algebra associated with} $(X ,r)$ is defined as
\begin{equation}
\label{Adef} \Acal = \Acal(\textbf{k},X,r) = \textbf{k}\langle X ; \;\Re_0(r) \rangle \simeq \textbf{k}\langle X  \rangle /(\Re),
\end{equation}
where $\Re = \Re(r)$ is the set of quadratic \emph{binomial relations}
\begin{equation}
\label{Re0} \Re = \{xy-y^{\prime}x^{\prime}\mid xy=y^{\prime}x^{\prime}\in \Re_0 (r) \}.
\end{equation}
\end{definition}
Clearly $\Acal$ is \emph{a quadratic algebra}, generated by $X$ and
 with defining relations  $\Re(r).$
Furthermore, $\Acal$  is isomorphic to the monoid algebra
$\textbf{k}S(X, r)$.
In many cases the associated algebra
will be \emph{standard finitely presented} with respect to the
 degree-lexicographic ordering induced by an appropriate enumeration of $X$, that is a PBW algebra.
 It is known in particular,  that the algebra $\Acal(\textbf{k},X,r)$
  has remarkable algebraic and homological properties when $r$ is involutive, nondegenerate
 and obeys the braid or Yang-Baxter equation
 in $X\times X\times X$.
 Set-theoretic solutions were introduced in  \cite{D,W} and have been under intensive study during the last decade.
 There are numerous  works on set-theoretic solutions and related
structures, of which a relevant selection for the interested reader
is \cite{W, TM, RTF,  ESS, Lu,  T04, T04s,  Takeuchi, Veselov,
 TSh08, TSh0806, TP}.

\begin{definition}
\label{quantumbinomialset&ybedef} Let $(X,r)$ be a quadratic set.
\begin{enumerate}
 \item
$(X,r)$ is said to be \emph{square-free} if $r(x,x)=(x,x)$ for all $x\in X.$
\item  $(X,r)$ is called a \emph{quantum binomial
set} if it is nondegenerate, involutive and square-free.
\item \label{YBE} $(X,r)$ is a \emph{set-theoretic solution of the
Yang-Baxter equation}
 (YBE)   if  the braid relation
\[r^{12}r^{23}r^{12} = r^{23}r^{12}r^{23}\]
holds in $X\times X\times X.$  In this case $(X,r)$ is also called a \emph{braided set}. If in addition $r$  is involutive $(X,r)$ is called
\emph{a symmetric set}.
\end{enumerate}
\end{definition}
\begin{example}
\label{example1}
$X = \{ \emph{x}_1, x_2, x_3, x_4, x_5 \}$ and $r$ is defined via the actions $\Lcal, \Rcal= (\Lcal)^{-1} \in \Sym(X)$ as
\begin{equation}
\label{examplen5}
\begin{array}{l}
r(x,y) = (\Lcal_x(y), (\Lcal_y)^{-1}(x))\quad\text{where}\\
 \Lcal_{x_1}= \Lcal_{x_3}= (x_2x_4)\quad \Lcal_{x_2}= \Lcal_{x_4}= (x_1x_3)\\
\Lcal_{x_5}= (x_1 x_2 x_3 x_4).
\end{array}
\end{equation}
This is a (square-free) symmetric set. (It has multipermutation
level 3, see \cite{TP}). Presenting the solution $(X,r)$ via the
left and the right actions is an elegant and convenient way to
express the corresponding  $\binom{n}{2}$ quadratic relations
$\Re(r)$ of the algebra $\Acal(\textbf{k}, X, r)$, especially when
$n$ is large. We recommend the reader to write down explicitly the
ten quadratic relations encoded in (\ref{examplen5}). Enumerated
this way, $x_1, \cdots, x_5$ are not PBW generators. If we reorder
the generators as
\[y_1=x_1 \prec y_2=x_3\prec y_3=x_2  \prec  y_4 =  x_4 \prec y_5=x_5,\]
then $y_1, \cdots, y_5$ are PBW generators of  $\Acal$, and
$\Acal$ is a binomial skew-polynomial ring w.r.t this new
enumeration.
\end{example}

\begin{definition}
\label{quantumbinomialalg_def}
A quadratic algebra $A(\textbf{k}, X, \Re) =
\textbf{k}\langle X \rangle/(\Re ) $ is \emph{a quantum
binomial algebra} if
the relations $\Re$ satisfy the following conditions
\begin{enumerate}
\item[(a)]
\label{quantumquadraticdefa} Each relation in  $\Re$ is of the
shape
\begin{equation}
\label{quantumbinomialeq1}
 xy-c_{yx}y^{\prime}x^{\prime},\quad\text{where}\quad x, y, x^{\prime},
y^{\prime} \in X, \quad\text{and}\quad c_{xy} \in \textbf{k}^{\times}
\end{equation}
 (this is what we
call \emph{a binomial relation}).
\item[(b)]
\label{quantumquadraticdefb} Each $xy, x \neq y$ of length $2$
occurs at most once in $\Re$.
\item[(c)]
\label{quantumquadraticdefc} Each relation is \emph{square-free},
i.e. it does not contain a monomial of the shape $xx,$ $x \in X.$
\item[(d)]
\label{quantumquadraticdefd} The relations $\Re$ are \emph{non
degenerate}, i.e. the canonical bijection $r=r(\Re): X\times X
\longrightarrow X \times X,$ associated with $\Re$,  is  non degenerate.
\end{enumerate}
Relations satisfying conditions (a)-(d) are called \emph{quantum binomial relations}.
\end{definition}

Clearly, each binomial skew-polynomial ring is a PBW quantum
binomial algebra. The algebra $\Acal(\textbf{k}, X,r)$ from the
Eaxmple \ref{examplen5} is a concrete quantum binomial algebra.
See  more examples at the end of the section. We recall that
(although this is not part of the definition) every $n$-generated
quantum binomial algebra has exactly $\binom{n}{2}$ relations.

With each quantum binomial algebra we associate two
maps determined canonically via its relations.

\begin{definition}
\label{associatedmapsdef}
Let  $\Re\subset \textbf{k}\langle X
\rangle$ be a set of quadratic binomial relations, satisfying
conditions (a)  and (b). Let $V = Span_{\textbf{k}} X$.

The canonically \emph{associated
quadratic set} $(X,r)$, with
 $r= r(\Re): X\times X \longrightarrow X\times X$
is defined as
\[
r(x,y)=(y^{\prime},x^{\prime}),\; \text{and}\;
r(y^{\prime},x^{\prime})= (x,y), \] if
$xy-c_{xy}y^{\prime}x^{\prime} \in \Re.$ If $xy$  does not occur
in any relation ($x=y$ is  possible)  then we set
\[r(x, y)= (x, y).\]


  \emph{The
automorphism associated with} $\Re,$ $R=R(\Re): V^{\otimes 2}
\longrightarrow V^{\otimes 2},$ is defined analogously. If
$xy-c_{xy}y^{\prime}x^{\prime} \in \Re$, we set
\[
R(x\otimes y)=c_{xy}y^{\prime}\otimes x^{\prime},\;\text{and}\;
R(y^{\prime}\otimes x^{\prime})= (c_{xy})^{-1}x\otimes y.\] If
$xy$ does not occur in any relation
we
set
\[ R(x\otimes y)= x\otimes y.\]

$R$ is called \emph{non-degenerate} if $r$ is non-degenerate. In
this case we shall also say that the defining relations $\Re$ are
\emph{non degenerate binomial relations}.
\end{definition}


Let $V$ be a $\textbf{k}$-vector space. A linear
automorphism $R$ of $V\otimes V$ is  \emph{a solution of the
Yang-Baxter equation}, (YBE)  if the equality
\begin{equation}
\label{YBER} R^{12}R^{23}R^{12} = R^{23}R^{12}R^{23}
\end{equation}
holds in the automorphism  group of $V\otimes V\otimes V,$ where
$R^{ij}$ means $R$ acting on the i-th and j-th component.

A quantum binomial algebra $A = \textbf{k }\langle X ; \Re\rangle,$
is \emph{a Yang-Baxter algebra }, in the sense of Manin
\cite{Maninpreprint}, if the associated map $R = R(\Re)$ is a
solution of the Yang-Baxter equation.

It was shown in \cite{TM} that each binomial skew polynomial ring is a Yang-Baxter algebra.

The results below can be extracted from \cite{TM}, \cite{T96},  and
\cite{T04s}, Theorem B.
\begin{fact}
\label{fact1} Let $A= \textbf{k}\langle X \rangle/(\Re)$ be
a quantum binomial algebra. Then the following two conditions
are equivalent.
\begin{enumerate}
\item
\label{theoremB2} $A$ is a binomial skew polynomial ring, with
respect to some appropriate enumeration of $X.$
\item
\label{theoremB3} The automorphism $R=R(\Re):V^{\otimes 2} \longrightarrow
V^{\otimes 2}$
is a solution of the
 Yang-Baxter equation, so $A$ is a Yang-Baxter algebra.
\end{enumerate}
Each of these conditions implies that $A$ is an Artin-Schelter
regular PBW algebra. Furthermore $A$ is a left and right
Noetherian domain.
\end{fact}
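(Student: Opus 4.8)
The plan is to prove $(1)\Leftrightarrow(2)$ by translating the Gr\"obner basis condition of Definition~\ref{binomialringdef}\,(d) into the braid identity for the canonically associated map, and then to read off the homological statements from the structure theory of binomial skew-polynomial rings. For $(1)\Rightarrow(2)$, I would start from a binomial skew-polynomial presentation $A=\textbf{k}\langle x_1,\dots,x_n\rangle/(\Re)$, with $\Re=\{x_jx_i-c_{ij}x_{i'}x_{j'}\}_{i<j}$ satisfying (a)--(d). On descending pairs the associated map acts by $r(x_j,x_i)=(x_{i'},x_{j'})$, and conditions (a)--(c) make $r$ a well-defined square-free non-degenerate involution of $X\times X$, while the associated automorphism $R=R(\Re)$ records the coefficients. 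The key observation is that condition (d) — that the overlap ambiguities $x_kx_jx_i$, $k>j>i$, resolve — is, once one writes out the two reduction paths by the diamond lemma, precisely the braid identity $R^{12}R^{23}R^{12}=R^{23}R^{12}R^{23}$ evaluated on $x_k\otimes x_j\otimes x_i$. It then remains to propagate this identity from strictly descending triples to all basis triples $x\otimes y\otimes z$: triples with a repeated letter are trivial by square-freeness, since $r(x,x)=(x,x)$, and the orbit analysis of the group generated by $r^{12}$ and $r^{23}$ acting on $X^3$ (Section~\ref{section_combinatorics}) reduces every remaining triple to the descending case. Hence $R$ solves the Yang-Baxter equation and $A$ is a Yang-Baxter algebra; this is essentially the content of \cite{TM}.

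For $(2)\Rightarrow(1)$, I would suppose $R=R(\Re)$ solves YBE, so that the associated quadratic set $(X,r)$ is a square-free non-degenerate symmetric set, and then produce a good enumeration of $X$ from its structure. The heart of the argument, again from \cite{TM}, is the structural fact that such a finite set admits a compatible linear order $x_1\prec\dots\prec x_n$: concretely, for each pair $i<j$ the relation supported on $x_jx_i$ can be written $x_jx_i-c_{ij}x_{i'}x_{j'}$ with $j>i'$ and $i'<j'$, which is condition (b). Condition (a) holds since the coefficients lie in $\textbf{k}^{\times}$ by the quantum binomial hypothesis, and condition (c), that every ordered $x_ix_j$ appears on some right-hand side, follows from non-degeneracy, which makes $r$ a bijection on descending pairs. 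Condition (d) is then automatic: by the translation of the previous paragraph read backwards, the overlaps $x_kx_jx_i$ resolve precisely because $R$ satisfies the braid relation. Hence $A$ is a binomial skew-polynomial ring with respect to $\prec$.

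Once $A$ is identified with a binomial skew-polynomial ring, the remaining assertions follow from \cite{T96,T04,T04s}: the presentation is PBW by (d), its set of ordered monomials is a $\textbf{k}$-basis, so $H_A(z)=1/(1-z)^n$; the graph of obstructions is the acyclic complete oriented graph $K_n$, whence $A$ has finite global dimension $n$ (via the Anick resolution, or by Theorem~\ref{theorem1}), and Gorenstein-ness is checked on that resolution, so $A$ is Artin-Schelter regular and Koszul. Noetherianity and the domain property descend from the monoid $S(X,r)$ — which is cancellative, orderable, and embeds in its group — to $A\cong\textbf{k}S(X,r)$ by the usual filtration argument.

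The main obstacle is $(2)\Rightarrow(1)$: extracting from the abstract braid condition a genuine enumeration of $X$ realizing conditions (b) and (c). This is exactly the structure theory of square-free symmetric sets and is where the substance of \cite{TM} lies; by contrast, the equivalence between condition (d) and the braid identity on descending triples is a routine application of the diamond lemma, and $(1)\Rightarrow(2)$ follows from it together with the orbit bookkeeping.
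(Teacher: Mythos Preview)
The paper does not prove this statement at all: it is recorded as a \emph{Fact}, with the sentence ``The results below can be extracted from \cite{TM}, \cite{T96}, and \cite{T04s}, Theorem B'' standing in for a proof. So there is no argument in the paper to compare your sketch against directly. Your references are the right ones, and your outline of $(2)\Rightarrow(1)$ --- produce a compatible linear order on $X$ from the symmetric-set structure, then verify (a)--(d) --- is exactly how the cited sources proceed, and you correctly flag the ordering step as the deep part.

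Where your sketch is shaky is the $(1)\Rightarrow(2)$ direction. Two points:
\begin{itemize}
\item The sentence ``triples with a repeated letter are trivial by square-freeness, since $r(x,x)=(x,x)$'' is false as written. Only diagonal triples $xxx$ are fixed; a triple such as $xxy$ with $x\neq y$ is moved by $r^{23}$ and generates a type-\textbf{(ii)} orbit in the sense of Proposition~\ref{proposition1}. The braid relation on such a triple is a genuine constraint (equivalent to the cyclic condition, cf.\ Proposition~\ref{proposition1}\,(\ref{prop1cc})), not a tautology.
\item Your identification ``condition (d) $=$ braid identity on strictly descending triples'' is not immediate. The diamond lemma says the two reduction paths from $x_kx_jx_i$ terminate at the same normal form, but nothing forces each path to have length exactly three; conditions (b)--(c) control where a single reduction lands but do not a priori bound the number of reductions on a length-$3$ word. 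So one cannot simply read off $R^{12}R^{23}R^{12}=R^{23}R^{12}R^{23}$ from resolvability of the overlap.
\end{itemize}
The arguments in \cite{TM} and \cite{T04s} for $(1)\Rightarrow(2)$ do not go by the direct translation you propose; they pass through the $I$-type structure of the monoid $S(X,r)$, or equivalently through a careful analysis of the length-$3$ reductions that simultaneously handles the coefficient and the type-\textbf{(ii)} cases. Incidentally, the present paper supplies its own independent route to $(1)\Leftrightarrow(2)$ later, in Theorem~\ref{theorem2}, via the dimension count $\dim A_3=\binom{n+2}{3}$ and the orbit combinatorics of Section~\ref{section_combinatorics}; that argument avoids both of the problematic steps above.
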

We shall prove in Section \ref{section_AS_YBE} that conversely, in
the class of quantum binomial algebras each Artin-Schelter regular
PBW algebra defines canonically a solution of the YBE, and
therefore is a Yang-Baxter algebra and a binomial skew-polynomial
ring.

We end up the section with two concrete examples of quantum binomial algebras with 4 generators.

\begin{example}
\label{example2}
Let $A  = \textbf{k} \langle x, y, z, t\rangle    /(\Re)$, where $X=\{ x, y, z, t\}$, and
\[
\begin{array}{llllllll}
\Re &=& \{xy-zt, & ty-zx, & xz-yx, &
tz-yt, & xt-tx, & yz-zy\}.
\end{array}
\]
Clearly, the relations are square-free, a direct verification shows that they are nondegenerate.
so $A$ is a quantum binomial algebra.
 More sophisticated proof shows that the set of relations $\Re$ is not a Gr\"{o}bner basis
w.r.t. deg-lex ordering coming from
any order (enumeration) of the set $X$.
This example is studied with details in  Section \ref{section_combinatorics}.
\end{example}

\begin{example}
\label{example3}
Let $A  = \textbf{k} \langle X \rangle/(\Re)$, where $X=\{ x, y, z, t\}$, and
\[
\begin{array}{llllllll}
\Re &=& \{xy-zt, & ty-zx, & xz-yt, &
tz-yx, & xt-tx, & yz-zy\}.
\end{array}
\]
We fix $t>x>z>y$, and take the corresponding deg-lex ordering on $\langle X \rangle$. Then  direct verification shows that $\Re$ is  a Gr\"{o}bner basis.
To do this one has to show that the ambiguities
$txz, \; txy,\; tzy,\; xzy$
are solvable. In this case the set
\[\Ncal = \{y^{\alpha}z^{\beta}x^{\gamma}t^{\delta}\mid \:\alpha,\beta,\gamma,\delta\geq 0\}\]
is the normal basis of $A$, (mod $\Re$).

Note that any order in which $\{t, x,\} > \{z,y\}$, or $\{z,y\} >
\{t, x,\} $, makes $A$ a PBW algebra,  there are exactly eight
such enumerations of $X$.

Furthermore, $A$ is a Yang-Baxter algebra, and an AS-regular domain of global dimension $4$.
\end{example}


\section{PBW algebras with polynomial growth and finite global dimension}
\label{section_growthgldim}

 Let $X = \{x_1, \cdots x_n\}$. As usual, we fix the deg-lex ordering $<$
 on $\langle X\rangle$.
Each element $g\in \textbf{k}\langle X\rangle$ has the shape $g = cu
+ h$, where $u \in \langle X\rangle,$ $c \in \textbf{k}^{\times}, h
\in \langle X\rangle,$ and   either $h = 0$, or $h= \sum_{\alpha}
c_{\alpha} u_{\alpha}$ is a linear combination of monomials
$u_{\alpha} < u$. $u$ is called the leading monomial of $g$ (w.r.t.
$<$) and denoted $LM(g)$. Every finitely presented graded algebra,
$A = \textbf{k}\langle X  \rangle / I $, where $I$ is a finitely
generated
 ideal of $\textbf{k}\langle X  \rangle$
 has an uniquely determined reduced Gr\"{o}bner basis $\textbf{G}$. In general,
 $\textbf{G}$ is infinite.  Anick introduced
  \emph{the set of obstructions $\textbf{W}$ for a connected graded  algebra},
  see \cite{Anick86}. It is easy to deduce from his definition that the set of
  obstructions $\textbf{W}$ is exactly \emph{the set of leading monomials}
of the elements of the reduced
  Gr\"{o}bner basis $\textbf{G}$, that is   $\textbf{W} = \{LM(g)\mid g \in \textbf{G}\}$.
 The obstructions are used to construct a free resolution of
 the field $\textbf{k}$ considered as an $A$-module, \cite{Anick86}.

 Consider now a PBW algebra $A  = \textbf{k}\langle X  \rangle /(\Re)$
with PBW generators \\
$X=\{x_1, \cdots x_n\}$. It follows from the definition of a PBW
algebra that the relations $\Re$ is exactly the reduced
Gr\"{o}bner basis of the ideal $(\Re)$. Hence, in this case,
\emph{the set of obstructions} $\textbf{W}$ is simply the set of
leading monomials of the defining relations.
\[
\textbf{W }= \{ LM(f) \mid f \in \Re\}.
\]

Then  $\textbf{N} = X^2 \backslash \textbf{W}$ is the set of \emph{normal monomials} (mod $\textbf{W}$) of length $2$.

\begin{notation}
\label{notation1}
We set
\[
\begin{array}{c}
\textbf{N}^{(0)}= \{1\},  \quad N^{(1)}= X,\\
\\
\textbf{N}^{(m)}= \{x_{i_1}x_{i_2}\cdots x_{i_m} \mid  x_{i_k}x_{i_{k+1}} \in
\textbf{N},\;\; 1\leq k \leq m-1\}, \; \;  m= 2,3,\cdots \\
\\
\textbf{N}^{\infty}= \bigcup _{m \geq 0}N^{(m)}.
\end{array}
\]
\end{notation}
Note
that the set $\textbf{N}^{(m)}$, $m \geq 0$ is a $\textbf{k}$-basis of
$A_m$, so  $\textbf{N}^{\infty}$ is the set of all normal (mod
$\textbf{W}$) words in $\langle X \rangle$. It is well-known that
the set $\textbf{N}^{\infty}$ project to a basis of $A$. More
precisely, the free associative algebra $\textbf{k}\langle X
\rangle$ splits into a direct sum of
  subspaces
  \[
\textbf{k}\langle X \rangle \simeq  Span_{\textbf{k}}
\textbf{N}^{\infty}\bigoplus I.
\]
So there are isomorphisms of vector spaces
\[\begin{array}{c}
A \simeq Span_{\textbf{k}} \textbf{N}^{\infty}\\
\\
A_m \simeq Span_{\textbf{k}} \textbf{N}^{(m)}, \; \dim A_m =
|\textbf{N}^{(m)}|, \; m= 0, 1, 2, 3, \cdots
\end{array}
\]
  For a PBW algebra $A$ there is a  canonically associated monomial algebra
  $A^0 = \textbf{k}\langle X  \rangle /(\textbf{W})$.
As a monomial algebra, $A^0$ is also PBW. Both algebras $A$ and
$A^0$ have the same set of obstructions $\textbf{W}$ and therefore
they have the same \emph{normal basis} $\textbf{N}^{\infty}$, the
same Hilbert series and the same growth. It follows from results
of Anick that $gl.\dim A = gl.\dim A^0$. More generally, the sets
of obstructions $\textbf{W}$ determines uniquely the Hilbert
series, growth and the global dimension for the whole family of
PBW algebras $A$ sharing the same $W$. The binomial skew
polynomial rings are an well known example of PBW algebras with
polynomial growth and finite global dimension, moreover they are
AS regular Noetherian domains, see \cite{TM}.


Let $M \subset X^2$ be a set of monomials of length $2$. We define
\emph{the graph $\Gamma_M$ corresponding to $M$} as follows.

\begin{definition} $\Gamma_M$   is
\emph{a directed graph} with a set of vertices $V(\Gamma_M) = X$ and
a set of directed edges (arrows) $E= E(\Gamma_M)$ defined as follows
 \[x \longrightarrow y \in E\; \; \text{iff}\; \;
x, y \in X, \; \text{and}\; xy \in M.\]

We recall that \emph{the order } of a graph $\Gamma$ is the number
of its vertices, i.e. $|V(\Gamma )|$, so $\Gamma_M$ is a graph of
order $|X|$. A path of length $k-1$ in $\Gamma_M$  is a sequence of
edges $v_1 \longrightarrow v_2\longrightarrow\cdots v_k $, where
$v_i \longrightarrow v_{i+1}\in E$.

\emph{A cycle (of length $k$)} in $\Gamma$ is a path of the shape
$v_1 \longrightarrow v_2\longrightarrow\cdots v_k \longrightarrow
v_1$ where $v_1, \cdots ,v_k$ are distinct vertices. \emph{A loop }
is a cycle of length $0$,  $x \longrightarrow x.$ So the graph
$\Gamma_M$ contains a loop $x\longrightarrow x$ whenever $xx\in M$,
and a cycle of length two
 $x \longrightarrow y \longrightarrow x$, whenever $xy, yx \in M$. In this case
 $x\longrightarrow y, y\longleftarrow x$ are called bidirected edges.
 Note that, following the terminology in graph theory, \emph{we make difference between directed and oriented graphs}.
A directed graph having no symmetric pair of directed edges (i.e.,
no  pairs $x \longrightarrow y$ and $y \longrightarrow x$) is known
\emph{as an oriented graph}. An oriented graph with no cycles is
called \emph{an acyclic oriented } graph. In particular, such a
graph has no loops.

Denote by $\overline{M}$ the complement $X^2 \backslash M.$ Then the
graph $\Gamma_{\overline{M}}\;$ is \emph{dual to  $\Gamma_M$} in the
sense that
\[x \longrightarrow y \in E(\Gamma_{\overline{M}})\; \:\text{iff}\;
x \longrightarrow y \;\text{is not an edge of }\; \Gamma_M.\]
\end{definition}

Let $A$ be a PBW algebra, let $\textbf{W}$ and $\textbf{N}$  be the set of obstructions, and the set of normal monomials of length $2$, respectively. Then the graph  $\Gamma_\textbf{N}$ gives complete information about the growth of $A$, while   the global dimension of $A$, can be read of $\Gamma_\textbf{W}$.

\emph{The graph of normal words of $A$}, $\Gamma_\textbf{N}$  was introduced in more general context by Ufnarovski \cite{Ufnarovski}.

Note that, in general, $\Gamma_{\textbf{N}}$ is  a directed graph
which may contain pairs of edges,  $x\longrightarrow y,
y\longrightarrow x$, so  $\Gamma_{\textbf{N}}$  is \emph{not
necessarily an oriented graph}.

The following is a particular case of a more general result of Ufnarovski.
\begin{fact} \cite{Ufnarovski}
 \label{GNfact}
 For every $m \geq 1$ there is a one-to-one correspondence between the
 set $\textbf{N}^{(m)}$ of normal words of length
$m$
and the set of paths of length $m-1$ in the graph $\Gamma_{\textbf{N}}$. The path $a_1\longrightarrow a_2\longrightarrow a_2 \longrightarrow \cdots \longrightarrow a_m$ (these are not necessarily distinct vertices) corresponds to the word $a_1a_2\cdots a_m \in \textbf{N}^{(m)}$.
 The algebra $A$ has polynomial growth of degree $m$ \emph{iff}
 \begin{enumerate}
 \item[(i)] The graph $\Gamma_\textbf{N}$ has no intersecting cycles, and
  \item[(ii)] $m$ is the largest number of (oriented) cycles occurring in a path of $\Gamma_\textbf{N}$.
 \end{enumerate}
 \end{fact}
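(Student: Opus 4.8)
The plan is to split the claim into the combinatorial correspondence, which is a routine definition‑chase, and the growth dichotomy, which carries the content; the whole point of the graph $\Gamma_{\textbf{N}}$ is that it reduces the second part to counting walks in a finite digraph.

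For the correspondence: by Notation~\ref{notation1}, a word $a_1a_2\cdots a_m$ with letters in $X$ lies in $\textbf{N}^{(m)}$ exactly when every length‑two factor $a_ka_{k+1}$ lies in $\textbf{N}$, and by the definition of $\Gamma_{\textbf{N}}$ this says precisely that each $a_k\longrightarrow a_{k+1}$ is an edge, i.e.\ that $a_1\longrightarrow a_2\longrightarrow\cdots\longrightarrow a_m$ is a walk of length $m-1$ in $\Gamma_{\textbf{N}}$ (in the sense of ``path'' used here, where vertices may repeat). Reading off the vertex sequence is plainly inverse to concatenation, so this is a bijection; consequently $\dim_{\textbf{k}}A_m=|\textbf{N}^{(m)}|=p_{m-1}$, where $p_\ell$ denotes the number of walks of length $\ell$ in $\Gamma:=\Gamma_{\textbf{N}}$, and everything reduces to the asymptotics of $p_\ell$ as $\ell\to\infty$.

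Next I would show that condition (i) is necessary by exhibiting exponential growth when it fails. If two distinct cycles $C\ne C'$ pass through a common vertex $v$, with $p$ and $q$ the numbers of edges of $C$ and $C'$, then, read as closed walks based at $v$, $C^{\,q}$ and $(C')^{\,p}$ are two distinct closed walks at $v$ of the common length $pq$ (distinct because distinct simple cycles have distinct edge sets). Hence for every $k$ the $2^k$ concatenations of $k$ copies, each chosen independently from $\{C^{\,q},(C')^{\,p}\}$, are pairwise distinct closed walks of length $kpq$, so $p_{kpq}\ge 2^k$ and $A$ has exponential growth. Therefore polynomial growth forces $\Gamma$ to have no intersecting cycles.

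The main step is the converse estimate, and this is where I expect most of the work to lie. Assume $\Gamma$ has no intersecting cycles. Then every vertex has at most one out‑edge inside its own strongly connected component — two such out‑edges from $v$ would complete, via strong connectivity, two distinct simple cycles through $v$ — so each component is a single simple cycle (possibly a loop) or a lone acyclic vertex, and the condensation $\widehat\Gamma$ is a finite acyclic digraph. A walk $\gamma$ of length $\ell$ therefore passes through a sequence of components that strictly increases along $\widehat\Gamma$, and the portion of $\gamma$ inside a cyclic component $C$ is uniquely determined once its entry vertex, exit vertex and number of edges are fixed, since it can only wind around $C$. Thus $\gamma$ is specified by a ``combinatorial type'' — the ordered list of components met, the connecting sub‑walks through the acyclic part, and the entry/exit vertices in each cyclic component encountered, of which there are only finitely many, a bound independent of $\ell$ because the acyclic part has fewer than $n$ vertices — together with the tuple $(t_1,\dots,t_k)$ of numbers of full turns taken in the $k\le d$ cyclic components met, subject to $t_1c_1+\cdots+t_kc_k=\ell-\mathrm{const}$, where $c_i$ is the length of the $i$‑th such component and $d$ is the largest number of cycles on a walk of $\Gamma$. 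Counting non‑negative integer solutions of this equation gives $p_\ell=O(\ell^{\,d-1})$; conversely, fixing a type realising the maximum $d$ and letting the $t_i$ range freely yields $\Theta(\ell^{\,d-1})$ distinct walks of length $\ell$ along an arithmetic progression of $\ell$ (the standard quasi‑polynomial count for representations of an integer as $\sum t_ic_i$). Hence $\sum_{j\le\ell}p_j=\Theta(\ell^{\,d})$, i.e.\ $A$ has polynomial growth of degree $d$, the number appearing in (ii) (the case $d=0$ being the trivial finite‑dimensional one). The delicate part is making ``combinatorial type'' precise enough that the decomposition of a walk is genuinely canonical and the set of types visibly finite, together with the residue‑class bookkeeping in the lower bound; none of it is deep, and it is precisely the walk‑counting that underlies Ufnarovski's theorem \cite{Ufnarovski}.
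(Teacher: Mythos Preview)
The paper does not prove this statement at all: it is recorded as a \emph{Fact} with a citation to Ufnarovski \cite{Ufnarovski}, and no argument is supplied. So there is nothing to compare your proposal against in the paper itself.

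That said, your proposal is a correct reconstruction of the standard argument. The bijection in the first paragraph is an immediate unwinding of Notation~\ref{notation1} and the definition of $\Gamma_{\textbf{N}}$. For the growth dichotomy, your exponential lower bound from two cycles sharing a vertex is the usual one, and your structural analysis under condition (i) --- that each strongly connected component is a single simple cycle or a lone vertex, so that a walk is determined by a bounded ``type'' plus a tuple of winding numbers --- is exactly the mechanism behind Ufnarovski's result. The only places that would need care in a full write-up are the ones you already flag: making the decomposition of a walk into its component-by-component pieces genuinely canonical (so the type count is finite and independent of $\ell$), and the quasi-polynomial bookkeeping for the lower bound (the $\Theta(\ell^{d-1})$ count only holds along residue classes modulo $\mathrm{lcm}(c_1,\dots,c_d)$, which is enough for GK-dimension but worth stating). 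None of this is a gap in your outline.
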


\begin{example}
All binomial skew-polynomial algebras $A$ with five PBW generators
$x_1, x_2, \cdots, x_5$ have the same graph $\Gamma_{\textbf{N}}$
as in Figure 1.
 The graph of obstruction
$\Gamma_{\textbf{W}}$ for $A$ can be seen in Figure 2. The Koszul
dual $A^{!}$ has corresponding graph of normal words
$\Gamma_{\textbf{N}^!}$ represented in Figure 3. The graphs in
Figure 2 and Figure 3 are acyclic tournaments, see Definition
\ref{def_completeacyclicgraph}.
\end{example}

The graph $\Gamma_\textbf{W}$ is dual to  $\Gamma_\textbf{N}$, i.e.
$x \longrightarrow y \in E(\textbf{W})$ \emph{iff} $x\longrightarrow
y$ is not an edge in  $\Gamma_{\textbf{N}}$. Similarly to
$\Gamma_{\textbf{N}}$,  $\Gamma_{\textbf{W}}$ is  a directed graph
and, in general, may contain pairs of edges,  $x\longrightarrow y,
y\longrightarrow x$ or loops $x\longrightarrow x$.

It is straightforward from Anick's general definition of an
$m$-chain, see \cite{Anick85, Anick86}  that in the case of PBW
algebras, each $m$-chain is a monomial of length $m+1$,
$y_{m+1}y_m\cdots y_1,$ where $y_{i+1}y_{i}\in \textbf{W}, \; 1 \leq
i \leq m.$ (For completeness, the \emph{$0$-chains}  are the
elements of $X$, by definition). This implies that for every $m \geq
1$ there is a one-to-one correspondence between the set of
$m$-chains, in the sense of Anick, and the set of paths of length
$m$ in the directed graph $\Gamma_{\textbf{W}}$. The $m$-chain
$y_{m+1}y_m\cdots y_1,$ with $y_{i+1}y_{i}\in \textbf{W}, \; 1 \leq
i \leq m,$ corresponds to the path $y_{m+1}\longrightarrow
y_m\longrightarrow \cdots \longrightarrow y_1$ of length $m$ in
$\Gamma_{\textbf{W}}$.

Note that Anick's resolution \cite{Anick86} \cite{Anick85} is
minimal for PBW algebras, and for monomial algebras (not
necessarily quadratic), and therefore a PBW algebra $A$ has finite
global dimension $d < \infty$ \emph{iff } there is a  $d-1$-chain,
but there are no $d$-chains in $\langle X \rangle$. The following
lemma is a "translation" of this in terms of the properties of
$\Gamma_\textbf{W}$.
 \begin{lemma}
 \label{Gldimlemma}
 $\emph{gl}.\dim A = d < \infty\;$ if and only if  $\; \Gamma_\textbf{W}$ is an acyclic oriented graph, and $d-1$ is the maximal length of a path occurring in $\Gamma_\textbf{W}$.
 \end{lemma}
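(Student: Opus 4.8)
The plan is to translate the chain-theoretic criterion for finite global dimension — namely that a PBW algebra $A$ has $\gldim A = d < \infty$ iff there exists a $(d-1)$-chain but no $d$-chains — into a statement about paths in $\Gamma_\textbf{W}$, using the one-to-one correspondence between $m$-chains and paths of length $m$ in $\Gamma_\textbf{W}$ established in the discussion preceding the lemma. First I would record the correspondence precisely: an $m$-chain is a monomial $y_{m+1}y_m\cdots y_1$ with every factor $y_{i+1}y_i \in \textbf{W}$, and this is exactly a path $y_{m+1}\to y_m\to\cdots\to y_1$ of length $m$ in $\Gamma_\textbf{W}$. Consequently the set of lengths of chains in $\langle X\rangle$ coincides with the set of lengths of paths in $\Gamma_\textbf{W}$.

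Next I would handle the two directions. For the ``only if'' direction, suppose $\gldim A = d < \infty$. By the cited minimality of Anick's resolution for PBW algebras, there is a $(d-1)$-chain and no $d$-chain, hence $\Gamma_\textbf{W}$ contains a path of length $d-1$ and no path of length $d$; in particular all paths have bounded length, so $d-1$ is the maximal path length. It remains to argue that a directed graph on finitely many vertices admitting no arbitrarily long paths must be an acyclic oriented graph: if $\Gamma_\textbf{W}$ had a cycle (including a loop $x\to x$ or a $2$-cycle $x\to y\to x$), one could traverse it repeatedly to produce paths of every length, contradicting boundedness. Thus $\Gamma_\textbf{W}$ is acyclic, and being acyclic it has no symmetric pair of edges and no loops, i.e.\ it is an acyclic oriented graph.

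For the ``if'' direction, suppose $\Gamma_\textbf{W}$ is an acyclic oriented graph whose longest path has length $d-1$. Since the vertex set $X$ is finite and there are no cycles, every path has length at most $|X|-1$, so the maximal path length is well defined; call it $d-1$. By the chain–path correspondence, the longest chain in $\langle X\rangle$ has length $d-1$: there is a $(d-1)$-chain and there are no $d$-chains. Invoking Anick's resolution again — which is minimal, hence computes $\gldim A$ exactly as the supremum of $m+1$ over $m$-chains plus the contribution of the $(-1)$- and $0$-chains — we conclude $\gldim A = d < \infty$.

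The main obstacle, such as it is, is purely bookkeeping: one must be careful with the off-by-one conventions (an $m$-chain is a monomial of length $m+1$ and corresponds to a path of length $m$; the global dimension is one more than the top chain index), and one must explicitly note that finiteness of $X$ is what guarantees that ``no cycles'' upgrades to ``bounded path length,'' which is the crux of why acyclicity is equivalent to finite global dimension rather than merely necessary for it. No deep input is needed beyond the already-quoted facts that Anick's resolution is minimal for PBW algebras and that $m$-chains biject with length-$m$ paths in $\Gamma_\textbf{W}$.
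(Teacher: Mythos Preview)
Your proposal is correct and follows exactly the approach the paper indicates: the paper does not give a separate proof of this lemma but states it immediately after observing that Anick's resolution is minimal for PBW algebras (so $\gldim A = d$ iff there is a $(d-1)$-chain but no $d$-chain) and that $m$-chains biject with length-$m$ paths in $\Gamma_{\textbf{W}}$, calling the lemma a ``translation'' of these facts. Your write-up simply carries out that translation, including the observation that cycles would yield arbitrarily long paths, which is precisely what the paper intends.
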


  All PBW algebras with the same set of PBW generators $x_1, \cdots, x_n$ and the same
  sets of obstructions $\textbf{W}$,  share the same graphs $\Gamma_\textbf{N}$ and
  $\Gamma_\textbf{W}$.
 In some cases it is convenient to study the corresponding monomial
 algebra $A^0$  instead of  $A$.
\begin{figure}[htp]
     \centering
     \includegraphics[scale=0.8]{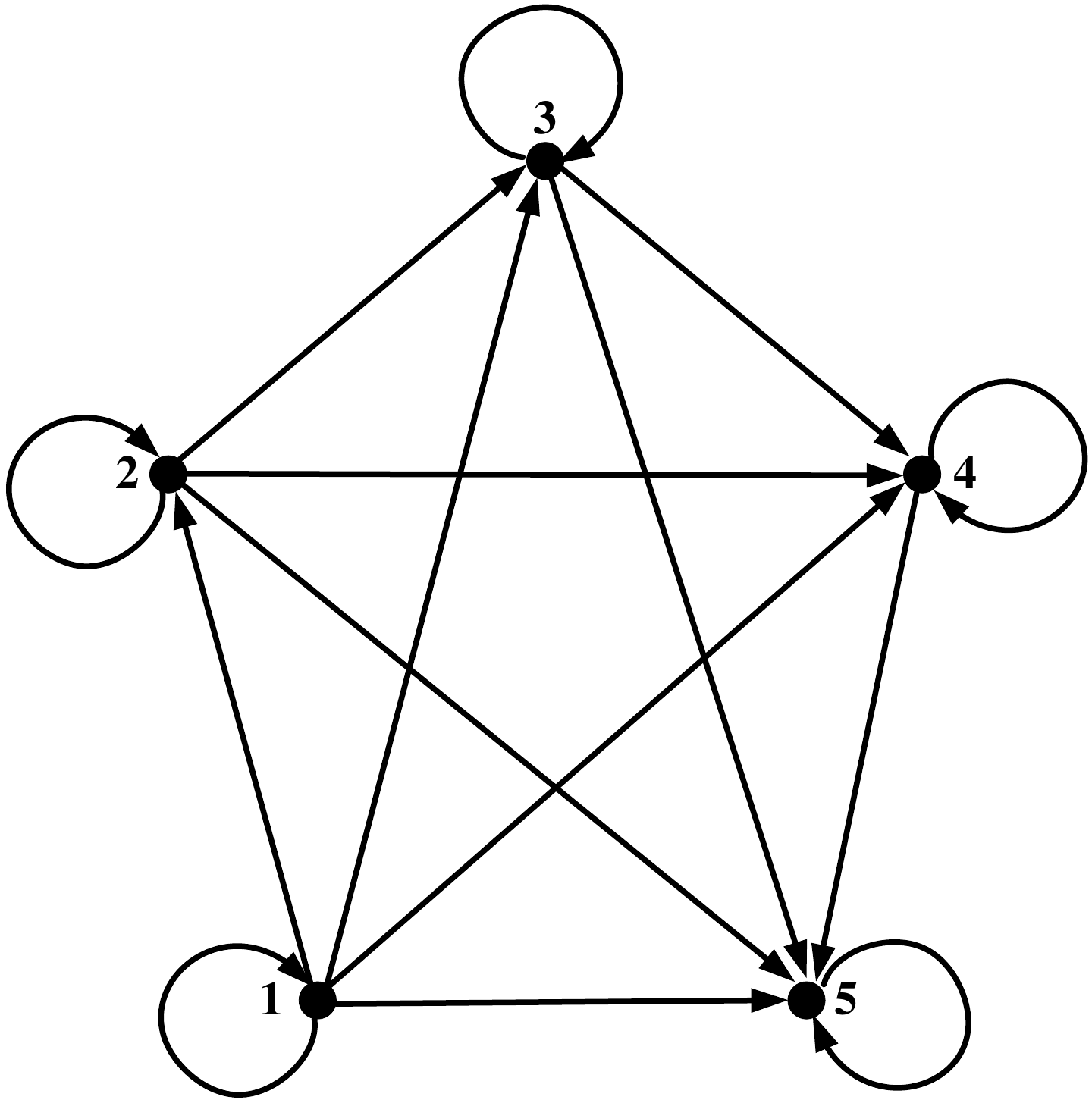}
     \caption{
     This is the graph of normal words $\Gamma_{\textbf{N}}$ for a PBW algebra $A$ with $5$ generators,
     polynomial growth and finite global dimension.}
     \label{fig:1}
\end{figure}


\begin{figure}[htp]
     \centering
     \includegraphics[scale=0.8]{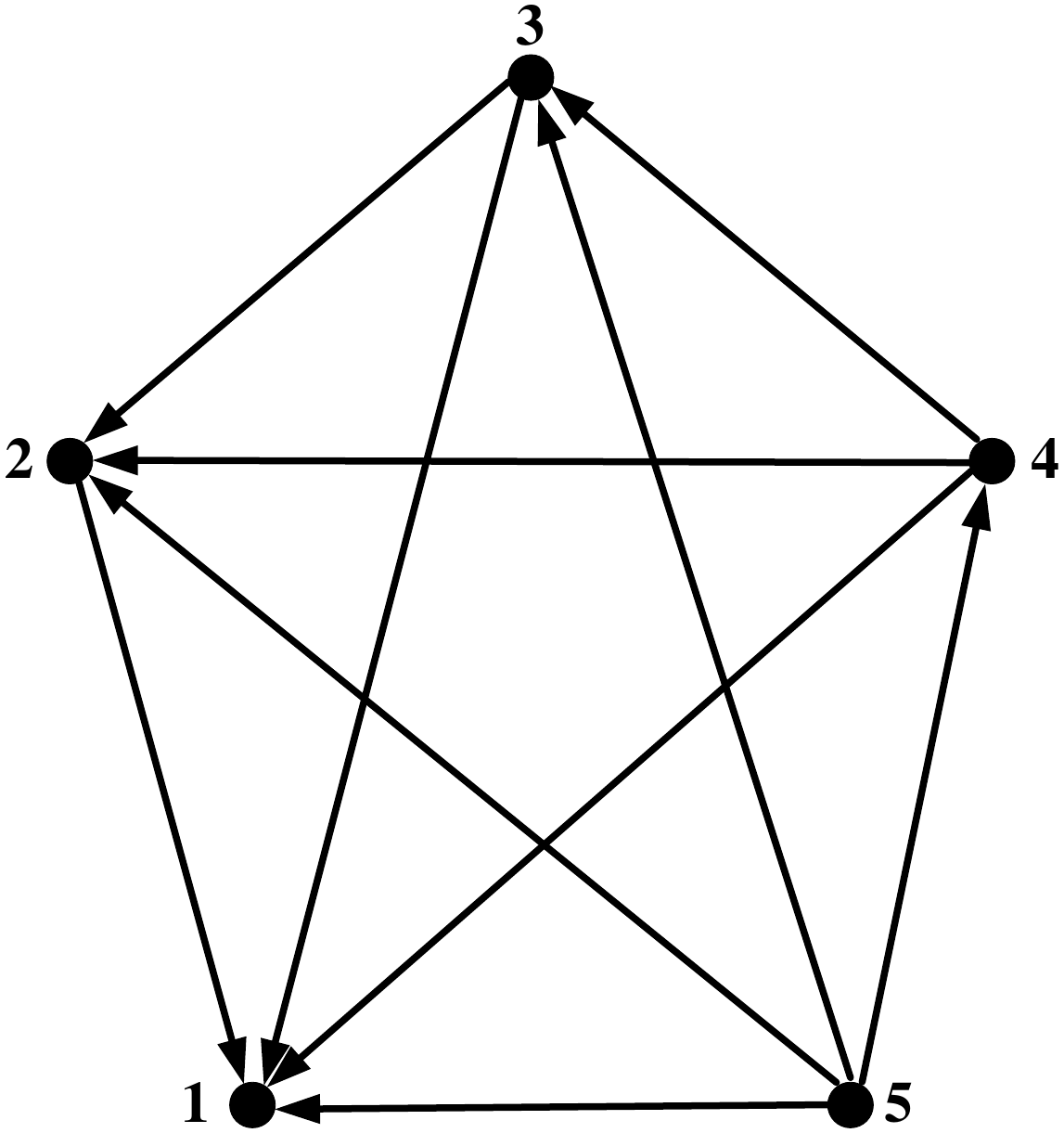}
     \caption{This is the graph of obstructions $\Gamma_{\textbf{W}}$, dual to $\Gamma_{\textbf{N}}$.
     It is \textbf{an acyclic tournament of order 5}, labelled "properly", as in Proposition \ref{beautifulgraphprop}.}
     \label{fig:3}
\end{figure}

\begin{figure}[htp]
     \centering
     \includegraphics[scale=0.8]{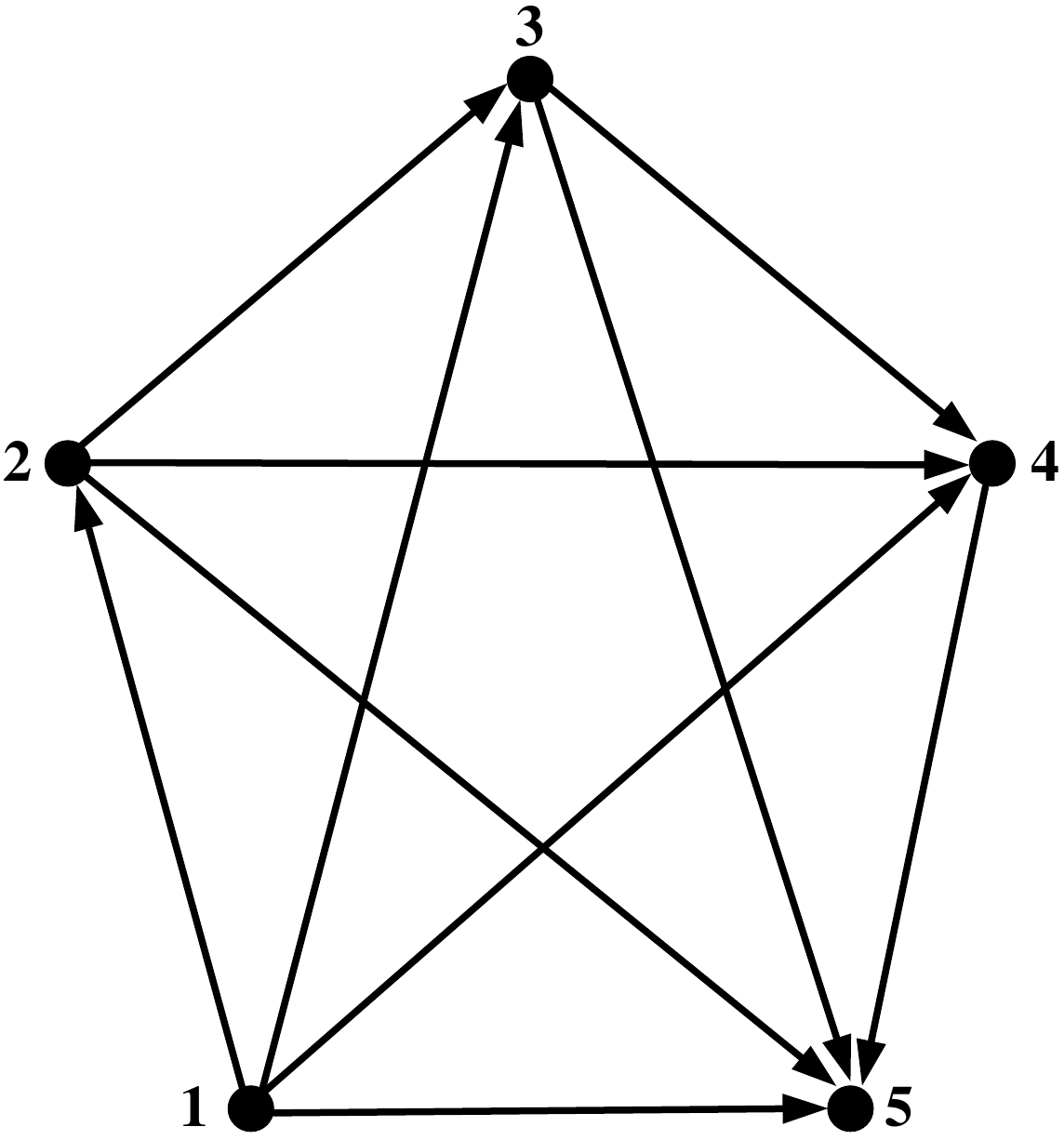}
     \caption{ This is the graph of normal words $\Gamma^{!}$ for the Koszul dual $A^{!}$. It is \textbf{an acyclic tournament of order 5} }.
     \label{fig:2}
\end{figure}
\begin{definition}
\label{def_completeacyclicgraph}
A complete oriented graph $\Gamma$
 is called \emph{a tournament or tour}.
In other words, a tournament is  a directed graph in which each pair
of vertices is joined by a single edge having a unique direction.
 Clearly, a complete directed graph with no cycles (of any length)
 is \emph{an acyclic tournament}.
\end{definition}
The following is straightforward.
\begin{remark}
\label{remar_tournament}
An acyclic oriented graph with $n$ vertices is a tournament \emph{iff } it has exactly $\binom{n}{2}$ (directed) edges.
\end{remark}

We shall need the following  proposition about oriented graphs.
\begin{proposition}
\label{beautifulgraphprop} Let $\Gamma$ be an acyclic tournament
of order $n$. Then the set of its vertices $V=V(\Gamma)$  can be
labelled  $V= \{y_1, y_2, \cdots, y_n\}$, so that the set of edges
is
\begin{equation}
\label{goodlabels}
E(\Gamma)= \{y_j\longrightarrow y_i \mid 1 \leq i < j\leq n\}.
\end{equation}
Analogously, the vertices can be labelled $V= \{z_1, z_2, \cdots,
z_n\}$, so that \[E(\Gamma)= \{z_i\longrightarrow z_j \mid 1 \leq
i < j\leq n\}.\]
\end{proposition}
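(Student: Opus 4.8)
The plan is to prove that an acyclic tournament admits a linear ordering of its vertices compatible with all edge directions --- in other words, that the relation defined by the edges is a total order. Concretely, define a relation on $V$ by setting $u > v$ whenever $u \longrightarrow v \in E(\Gamma)$. Since $\Gamma$ is a tournament, any two distinct vertices are comparable; since $\Gamma$ is oriented (no symmetric pairs), the relation is antisymmetric; so it remains to show transitivity, and then that the resulting total order is the desired labelling.

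First I would prove transitivity. Suppose $u \longrightarrow v$ and $v \longrightarrow w$ are edges with $u, v, w$ distinct (if two of them coincide there is nothing to check, using antisymmetry). Because $\Gamma$ is a tournament, exactly one of $u \longrightarrow w$ or $w \longrightarrow u$ is an edge. If $w \longrightarrow u$ were the edge, then $u \longrightarrow v \longrightarrow w \longrightarrow u$ would be a cycle of length $3$ on distinct vertices, contradicting acyclicity. Hence $u \longrightarrow w \in E(\Gamma)$, establishing transitivity. Thus $(V, >)$ is a strict total order on the finite set $V$.

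Next I would conclude. A strict total order on a finite set of size $n$ determines a unique increasing enumeration: label the vertices $y_1, y_2, \ldots, y_n$ so that $y_j > y_i$ exactly when $j > i$; concretely $y_1$ is the unique minimal element, $y_2$ the minimal element of $V \setminus \{y_1\}$, and so on. By the definition of $>$, this means $y_j \longrightarrow y_i \in E(\Gamma)$ precisely for $1 \leq i < j \leq n$, which is (\ref{goodlabels}). For the second labelling, simply set $z_i = y_{n+1-i}$; then $z_i \longrightarrow z_j$ is an edge iff $y_{n+1-i} \longrightarrow y_{n+1-j}$ is an edge iff $n+1-i > n+1-j$ iff $i < j$, as required. (Equivalently, one runs the same argument on the reversed relation.)

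I do not anticipate a serious obstacle here: the only real content is the exclusion of $3$-cycles, which is immediate from acyclicity, and everything else is the elementary fact that a tournament whose edge relation is transitive is precisely a linear order. One point worth stating carefully is that acyclicity as defined in the paper forbids cycles of every length on distinct vertices, and in particular $3$-cycles; this is exactly what the transitivity argument uses. If one wished to avoid even invoking "finite strict total orders have an increasing enumeration," one could instead argue by induction on $n$: an acyclic tournament has a source (a vertex with no incoming edges), obtained by noting that out-degrees are distinct --- if $u, v$ had equal out-degree, comparability plus a counting argument would force a cycle --- so some vertex has out-degree $n-1$; remove it, apply the inductive hypothesis to the remaining acyclic tournament on $n-1$ vertices, and prepend. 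Either route is routine; I would present the total-order version as it is the cleanest.
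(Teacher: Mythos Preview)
Your proof is correct. Both you and the paper hinge on the same observation --- that acyclicity forbids $3$-cycles, which forces the edge relation to be transitive --- but you package it differently. The paper argues by induction on $n$: it removes one vertex, relabels the remaining acyclic tournament by the inductive hypothesis as $v_1,\dots,v_{n-1}$, and then locates the correct slot for the removed vertex $v$ by taking the maximal $j$ with $v\longrightarrow v_j$ and using the no-$3$-cycle argument to show $v\longrightarrow v_i$ for all $i\le j$; a small case analysis then inserts $v$. Your route instead recognises directly that the edge relation is a strict total order on $V$ and reads off the enumeration in one stroke, with the second labelling obtained by reversal. Your version is shorter and more conceptual; the paper's version is more explicitly constructive and avoids invoking the (admittedly elementary) fact that a finite strict total order has an increasing enumeration. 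Interestingly, the inductive alternative you sketch at the end (find a source, peel it off) is a third variant, distinct from the paper's, which removes an \emph{arbitrary} vertex and then inserts it into the already-ordered remainder.
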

\begin{proof}
We proof this by induction on the order of $\Gamma$.

The statement is obvious for $n=2.$ Assume the statement of
proposition is true for graphs with $n-1$ vertices. Let $\Gamma$
be an an acyclic tournament, of order $n$ with vertices labelled
$\{1,\cdots, n\}$ and set of edges $E = E(\Gamma)$. Let
$\Gamma_{n-1}$ be the subgraph of $\Gamma$ with set of vertices
$V^{\prime}=
\{1,\cdots, n-1\}$ and set of edges
\[E^{\prime}=
\{i
\longrightarrow j \mid 1 \leq i, j \leq n-1\;
i\longrightarrow j \in E(\Gamma)\}.\]
 By the inductive assumption the
set of vertices $V^{\prime}$ can be relabelled \[V^{\prime}=
\{v_1, \cdots v_{n-1}\}\], s.t.
\[
E^{\prime}= \{v_j\longrightarrow v_i \mid 1 \leq i
< j\leq n-1\}.
\]
Denote by $v$ the $n$-th vertex of $\Gamma$.
Two cases are possible.

(a) $v_j \longrightarrow v \in E(\Gamma),\;  \forall\; 1 \leq j
\leq n-1.$ In this case the relabelling is clear, we set $y_1 =
v,$ and $y_{j+1}= v_j, 1 \leq j \leq n-1$. Then the labelling $V =
\{y_1 \cdots y_n\}$ agrees with \ref{goodlabels}.

(b) there exist a $j,\; 1 \leq j \leq n-1,$ such that $\; v
\longrightarrow v_j \in E(\Gamma).\;$ Let $j$, $1 \leq j \leq n-1$,
be the maximal index   with the property $v \longrightarrow v_j \in
E(\Gamma)$.

Assume $j > 1,$ and let $1 \leq i < j$. We claim that $v
\longrightarrow v_i \in E(\Gamma).$ Assume the contrary. By
assumption the vertices $v, v_i$ are connected with a directed edge,
so $v_i \longrightarrow v \in E(\Gamma).$ Note that by the inductive
assumption $v_j\longrightarrow v_i \in E(\Gamma).$
  So the graph $\Gamma$ contains the cycle
  \[
  v \longrightarrow v_j \longrightarrow v_i \longrightarrow v,
  \]
 but by  hypothesis $\Gamma$ is acyclic, a contradiction.
Thus we have
\[\begin{array}{ll}
v \longrightarrow v_i \in E(\Gamma), &\quad \forall \; i,\; 1 \leq i \leq j,\\
&\\
v_k \longrightarrow v \in E(\Gamma),&\quad  \forall \; k,\; j < k
\leq n-1\quad(\text{if}\; j< n-1).
\end{array}
\]

Three cases are possible

\textbf{A.} $j = 1$. In this case we set \[y_1 = v_1,\;y_2 = v, \;
y_{k+1} = v_k, \; 2 \leq k \leq n-1.\]

\textbf{B.} $1 < j < n-1$. In this case we set \[y_k = v_k, \; 1
\leq k \leq j-1,\; y_j=v, \;y_{k+1} = v_k, \;j \leq k \leq n-1.\]

\textbf{C.} $j = n-1$. In this case we set \[y_k = v_k, \;1 \leq k
\leq n-1, \; y_{n}=v.\]
\end{proof}

$A^0$ is \emph{a quadratic monomial algebra } if it has a
presentation $A^0  = \textbf{k}\langle X  \rangle /(W)$, where $W$
is a set of monomials of length $2$.    Any quadratic monomial
algebra $A^0$ is a PBW algebra, furthermore any enumeration $x_1
\cdots, x_n  $ of $X$ gives PBW generators of $A^0$.

\begin{theorem}
\label{monomialalgebrath} Let $A^0  = \textbf{k}\langle x_1 \cdots,
x_n  \rangle /(\textbf{W})$  be a quadratic monomial algebra. The
following conditions are equivalent
\begin{enumerate}
\item
\label{theor12}
$A^0$ has finite global dimension, and polynomial growth.
\item
\label{theor12a}
$A^0$ has finite global dimension, and $|\textbf{W}|= \binom{n}{2}$.
\item
\label{theor12b}
$A^0$ has polynomial growth, $\textbf{W}\bigcap \diag X^2 = \emptyset$,  and $|\textbf{W}|= \binom{n}{2}$.
 \item
 \label{theor12main}
 The graph $\Gamma_{\textbf{W}}$ is an acyclic tournament.
\item
\label{theor11}
\[
H_{A^0}(z)= \frac{1}{(1-z)^n}.
\]
 \item
 \label{theor13}
 There is a permutation $y_1, \cdots, y_n$ of $x_1 \cdots, x_n$ such that
 \begin{equation}
 \label{Ninf}
 \textbf{N}^{\infty}= \{y_1^{{\alpha}_1}\cdots y_n^{{\alpha}_n}\mid {\alpha}_i \geq 0, \; 1 \leq i \leq n\}.
  \end{equation}
 \item
 \label{theor14}
 There is a permutation $y_1, \cdots, y_n$ of $x_1 \cdots, x_n,$ such that
\[ \textbf{W}= \{y_jy_i \mid \; 1 \leq i < j \leq n\}.
 \]
 \end{enumerate}
 Furthermore, in this case
  \[\emph{gl}.\dim A^0 = n = \;\text{the degree of polynomial growth of } \; A.\]
\end{theorem}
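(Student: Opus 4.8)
The plan is to take condition~\eqref{theor12main} (that $\Gamma_{\textbf{W}}$ is an acyclic tournament) as the hub, proving \eqref{theor12}$\Leftrightarrow$\eqref{theor12main}, \eqref{theor12a}$\Leftrightarrow$\eqref{theor12main}, \eqref{theor12b}$\Leftrightarrow$\eqref{theor12main}, together with the forward chain \eqref{theor12main}$\Rightarrow$\eqref{theor14}$\Rightarrow$\eqref{theor13}$\Rightarrow$\eqref{theor11}$\Rightarrow$\eqref{theor12b}, which closes the loop through \eqref{theor12b}$\Rightarrow$\eqref{theor12main}. The only inputs needed are: $\Gamma_{\textbf{N}}$ is the dual of $\Gamma_{\textbf{W}}$; $\dim_{\textbf{k}}A^0_m=|\textbf{N}^{(m)}|$ equals the number of paths of length $m-1$ in $\Gamma_{\textbf{N}}$; Lemma~\ref{Gldimlemma} (finite global dimension $d$ $\Leftrightarrow$ $\Gamma_{\textbf{W}}$ acyclic oriented with longest path of length $d-1$); Fact~\ref{GNfact} (polynomial growth of degree $m$ $\Leftrightarrow$ $\Gamma_{\textbf{N}}$ has no intersecting cycles and $m$ is the largest number of cycles on a path); Proposition~\ref{beautifulgraphprop}; and Remark~\ref{remar_tournament}.

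The forward chain and two of the spoke-equivalences are routine. Given \eqref{theor12main}, Proposition~\ref{beautifulgraphprop} relabels $X=\{y_1,\dots,y_n\}$ so that $E(\Gamma_{\textbf{W}})=\{y_j\to y_i\mid i<j\}$, i.e.\ $\textbf{W}=\{y_jy_i\mid 1\le i<j\le n\}$, which is \eqref{theor14}; then $\textbf{N}=\{y_iy_j\mid i\le j\}$, so a word is normal iff its letter-indices are non-decreasing, giving $\textbf{N}^{\infty}=\{y_1^{\alpha_1}\cdots y_n^{\alpha_n}\}$, which is \eqref{theor13}; counting these by degree gives $\dim_{\textbf{k}}A^0_m=\binom{m+n-1}{n-1}$, hence $H_{A^0}(z)=1/(1-z)^n$, which is \eqref{theor11}. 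For \eqref{theor12}$\Leftrightarrow$\eqref{theor12main}: if \eqref{theor12} holds then Lemma~\ref{Gldimlemma} makes $\Gamma_{\textbf{W}}$ acyclic oriented, so it has no loops and every vertex carries a loop in $\Gamma_{\textbf{N}}$; were some pair $\{x,y\}$ to contribute no edge to $\Gamma_{\textbf{W}}$, then $xy,yx\in\textbf{N}$ would give a $2$-cycle $x\to y\to x$ in $\Gamma_{\textbf{N}}$ meeting the loop at $x$, contradicting Fact~\ref{GNfact}; hence $\Gamma_{\textbf{W}}$ is a complete acyclic oriented graph, i.e.\ \eqref{theor12main}. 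Conversely \eqref{theor12main}$\Rightarrow$\eqref{theor12} is read off the labelling above: the longest path $y_n\to\cdots\to y_1$ in $\Gamma_{\textbf{W}}$ has length $n-1$, so $\gldim A^0=n$ by Lemma~\ref{Gldimlemma}, while the dual graph $\Gamma_{\textbf{N}}$ has as its only cycles the $n$ loops, which a single path can meet all of, so the growth is polynomial of degree $n$ by Fact~\ref{GNfact}; this also gives the ``Furthermore'' clause $\gldim A^0=n=$ the degree of polynomial growth. Finally \eqref{theor12a}$\Leftrightarrow$\eqref{theor12main} is immediate: \eqref{theor12a}$\Rightarrow$\eqref{theor12main} since finite global dimension makes $\Gamma_{\textbf{W}}$ acyclic oriented (Lemma~\ref{Gldimlemma}) and with $|\textbf{W}|=\binom{n}{2}$ edges it is a tournament by Remark~\ref{remar_tournament}, and \eqref{theor12main}$\Rightarrow$\eqref{theor12a} since a tournament on $n$ vertices has $\binom{n}{2}$ edges; likewise \eqref{theor12main}$\Rightarrow$\eqref{theor12b} reading off $\textbf{W}\cap\diag X^2=\emptyset$ and $|\textbf{W}|=\binom{n}{2}$.

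It remains to run \eqref{theor11}$\Rightarrow$\eqref{theor12b}$\Rightarrow$\eqref{theor12main}. From \eqref{theor11}, the degree-$2$ coefficient $\binom{n+1}{2}$ of $1/(1-z)^n$ forces $|\textbf{W}|=n^2-\binom{n+1}{2}=\binom{n}{2}$, and the coefficients $\binom{m+n-1}{n-1}$ are a polynomial of degree $n-1$ in $m$, so $A^0$ has polynomial growth of degree exactly $n$; by Fact~\ref{GNfact} some path of $\Gamma_{\textbf{N}}$ carries $n$ pairwise non-intersecting cycles, and since $\Gamma_{\textbf{N}}$ has only $n$ vertices each such cycle is a loop and every vertex carries one, i.e.\ $\textbf{W}\cap\diag X^2=\emptyset$, which is \eqref{theor12b}. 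Now assume \eqref{theor12b}. Classify the $\binom{n}{2}$ off-diagonal pairs $\{x,y\}$ by whether $0$, $1$ or $2$ of $xy,yx$ lie in $\textbf{W}$, with counts $a,b,c$; then $a+b+c=\binom{n}{2}$ and (since $\textbf{W}$ has no diagonal element) $|\textbf{W}|=b+2c=\binom{n}{2}$, so $a=c$. If $a=c\ge 1$, the ``both in $\textbf{N}$'' pair gives a $2$-cycle in $\Gamma_{\textbf{N}}$ meeting a loop at one of its vertices (loops exist everywhere by \eqref{theor12b}), contradicting the no-intersecting-cycles half of Fact~\ref{GNfact}; hence $a=c=0$, so $\Gamma_{\textbf{W}}$ is a tournament; and a directed cycle $v_1\to\cdots\to v_k\to v_1$ in it would, by tournament-ness, reverse to a cycle $v_1\to v_k\to\cdots\to v_2\to v_1$ in $\Gamma_{\textbf{N}}$ again meeting a loop, contradiction — so $\Gamma_{\textbf{W}}$ is an acyclic tournament, which is \eqref{theor12main}. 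The step I expect to be the real work is \eqref{theor11}$\Rightarrow$\eqref{theor12b}: the Hilbert series only constrains dimensions, and extracting the rigid shape of $\Gamma_{\textbf{W}}$ from it forces one to use the precise degree bookkeeping in Ufnarovski's growth criterion to conclude that all $n$ vertices must carry loops.
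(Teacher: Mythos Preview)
Your proposal is correct and follows essentially the same approach as the paper: both take condition~\eqref{theor12main} as the hub, use Proposition~\ref{beautifulgraphprop} for \eqref{theor12main}$\Leftrightarrow$\eqref{theor14}, and rely on Lemma~\ref{Gldimlemma} and Fact~\ref{GNfact} in the same way to handle the spokes. The only organizational difference is that the paper proves \eqref{theor11}$\Rightarrow$\eqref{theor12main} directly rather than routing through \eqref{theor12b}, but your explicit $a=c$ counting in \eqref{theor12b}$\Rightarrow$\eqref{theor12main} is if anything cleaner than the paper's more compressed version of the same argument.
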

\begin{proof}
Condition (\ref{theor12main}) is central for our proof.

\textbf{A.} We will start with several easy implications.

Suppose (\ref{theor12main}) holds, so $\Gamma_{\textbf{W}}$ is an
acyclic tournament. By Proposition \ref{beautifulgraphprop} the
set of its vertices $V=V(\Gamma_{\textbf{W}})$  can be relabelled
$V= \{y_1, y_2, \cdots, y_n\}$, so that
\begin{equation}
\label{goodlabelsW}
E(\Gamma_{\textbf{W}})= \{y_j\longrightarrow y_i\mid 1 \leq i < j\leq n\}.
\end{equation}
This clearly implies condition (\ref{theor14}). The inverse implication is also clear.

So  (\ref{theor12main}) $\;\Longleftrightarrow\;$ (\ref{theor14}).

The following implications are straightforward
\[\begin{array}{c}
(\ref{theor13}) \;\Longleftrightarrow\; (\ref{theor14}) \; \Longrightarrow \; (\ref{theor11})\\
(\ref{theor14})\Longrightarrow (\ref{theor12b}).
\end{array}
\]
(\ref{theor12main}) $\;\Longrightarrow \;$ (\ref{theor12a}). Suppose (\ref{theor12main}) holds. As an
acyclic tournament  $\Gamma_{\textbf{W}}$ contains exactly
$\binom{n}{2}$ edges, and therefore $|\textbf{W}|= \binom{n}{2}$.
By (\ref{goodlabelsW}) the set $E(\Gamma_{\textbf{W}})$ contains
the edges $y_j\longrightarrow y_{j-1}, 2 \leq j \leq n$, so the
graph $\Gamma_{\textbf{W}}$ has a path $y_n \longrightarrow
y_{n-1}\longrightarrow\cdots \longrightarrow y_1$ of length $n-1$.
Clearly, there are no longer paths in  $\Gamma_{\textbf{W}}$ thus
$gl.\dim A^0 = n$. This verifies (\ref{theor12main})
$\;\Longrightarrow \;$ (\ref{theor12a}).

It is also clear that (\ref{theor12main}) $\;\Longrightarrow \;$
(\ref{theor12}).

(\ref{theor12a}) $\Longrightarrow$ (\ref{theor12main}). Note first
that $|\textbf{W}|= \binom{n}{2}$ implies that the graph has exactly
$\binom{n}{2}$ edges. Next $ gl.\dim A^0 < \infty$ implies that
$\Gamma_{\textbf{W}}$ is an acyclic oriented graph, (see Lemma
\ref{Gldimlemma}) so, by Remark \ref{remar_tournament},
$\Gamma_{\textbf{W}}$ is an acyclic tournament, we have shown
\[
(\ref{theor12a}) \Longleftrightarrow
(\ref{theor12main})\Longrightarrow (\ref{theor12}).
\]

(\ref{theor12b}) $\Longrightarrow$ (\ref{theor12main}). Assume
(\ref{theor12b}) holds. Then $\Gamma_{\textbf{W}}$ has exactly
$\binom{n}{2}$ edges, furthermore  each edge has the shape
$x\longrightarrow y,$  $x \neq y$. Therefore its dual graph
$\Gamma_{\textbf{N}}$ has a loop $x \longrightarrow x$ at every
vertex, and exactly $\binom{n}{2}$ edges $y\longrightarrow x,$ where
$x\longrightarrow y\in E(\Gamma_{\textbf{W}})$. The polynomial
growth of $A^0$ implies that $\Gamma_{\textbf{N}}$ has no cycles of
length $\geq 2$, and therefore every two vertices in
$\Gamma_{\textbf{N}}$ are connected with a single directed edge, so
$\Gamma_{\textbf{N}}$ is an oriented graph. It follows then that
$\Gamma_{\textbf{W}}$ is an acyclic oriented tournament, which
verifies the implication (\ref{theor12b}) $\Longrightarrow$
(\ref{theor12main}). The inverse implication is clear. We have shown
\[
(\ref{theor12main})\Longleftrightarrow (\ref{theor12b}).
\]

It remains to show (\ref{theor12})  $\;\Longrightarrow \;$
(\ref{theor12main}), and (\ref{theor11})  $\;\Longrightarrow \;$
(\ref{theor12main}). The two implications are verified by similar
argument.

\textbf{B.}  (\ref{theor11})   $\;\Longrightarrow\;$ (\ref{theor12main}).
Note first that
\begin{equation}
\label{hilbeq}
H_{A^0}(z)= \frac{1}{(1-z)^n} = 1 + nz + \binom{n+1}{2}z^2 + \binom{n+2}{3}z^3 + \cdots.
\end{equation}
So
\[
\dim A_2 =|\textbf{N}| = \binom{n+1}{2}, \quad \text{which implies}\quad |\textbf{W}| = \binom{n}{2}.
\]
Secondly,  the special shape of Hilbert series $H_{A^0}(z)$ implies
that $A^0$ has polynomial growth of degree $n$. Therefore by Fact
\ref{GNfact} the graph $\Gamma_{\textbf{N}}$ contains a path with
$n$ cycles. The only possibility for such a path is to have $n$
distinct  vertices and a loop at every vertex:

\bigskip

\bigskip

\begin{equation}
\label{longestpatheq}
\xymatrixrowsep{.8pc}
\xymatrix @!=0pc {
&*{\bullet}="b" \ar@(ul,ur) c &*{\longrightarrow}
&*{\bullet}="b" \ar@(ul,ur) c &*{\longrightarrow}
&*{\bullet}="b" \ar@(ul,ur) c &*{\longrightarrow}
&*{\cdots}="b"  &*{\longrightarrow}
&*{\bullet}="b" \ar@(ul,ur) c \\
&*{^{a_1}}&&*{^{a_2}}&&*{^{a_3}}&&&&*{^{a_n}} (10,30)
}
\end{equation}

\bigskip
Indeed $\Gamma_{\textbf{N}}$ has exactly $n$ vertices, and has no
intersecting cycles. Each loop $x\longrightarrow x$ in
$\Gamma_{\textbf{N}}$ implies  $xx \in N$ , so $\Delta_2 \subset
\textbf{N}$ ($\Delta_2 = \diag(X^2)$). Then the complement
$\textbf{N}\backslash \Delta_2$ contains exactly
 $\binom{n}{2}$  monomials of the shape $xy, x \neq y$, or equivalently
 $\Gamma_{\textbf{N}}$ has $\binom{n}{2}$ edges of the shape $x \longrightarrow y, x \neq y.$
 Clearly,
 no pair  $x \longrightarrow y, y \longrightarrow x$ belongs to $E(\Gamma_{\textbf{N}})$,
 otherwise $\Gamma_{\textbf{N}}$ will have two intersecting cycles $x\longrightarrow x$ and
 $x \longrightarrow y \longrightarrow x$, but this is impossible
 since $A^0$ has polynomial growth.
Therefore $\Gamma_{\textbf{N}}$ is an oriented graph.

 Consider now the dual graph $\Gamma_{\textbf{W}}$. The properties of $\Gamma_{\textbf{N}}$
 imply that (a) $\Gamma_{\textbf{W}}$ has no loops. (b) $\Gamma_{\textbf{W}}$ has no cycles of length
 $\geq 2$. Each edge $x\longrightarrow y$ in $E(\Gamma_{\textbf{N}})$ has a corresponding edge $x\longleftarrow y \in E(\Gamma_{\textbf{W}})$.
 So $\Gamma_{\textbf{W}}$ is an acyclic oriented graph with $\binom{n}{2}$ edges, and
 Remark \ref{remar_tournament} again implies that it  is an acyclic tournament.
 This proves (\ref{theor11})   $\;\Longrightarrow\;$ (\ref{theor12main}).

 \textbf{C. } Finally we show
(\ref{theor12})  $\;\Longrightarrow \;$ (\ref{theor12main}).

Assume that $A^0$ has  polynomial growth and finite global
dimension. We shall  use once more the nice balance between the dual
graphs $\Gamma_{\textbf{W}}$ and $\Gamma_{\textbf{N}}$. Note first
that $\Gamma_{\textbf{N}}$ has no intersecting cycles,  since
otherwise $A$ would have exponential growth. On the other hand
$\Gamma_{\textbf{W}}$ is acyclic, therefore it is an acyclic
oriented graph. In particular, $\Gamma_{\textbf{W}}$  has no loops,
or equivalently $\textbf{W}$ does not contain monomials of the type
$xx$. It follows then that the dual graph $\Gamma_{\textbf{N}}$ has
loops $x \longrightarrow x$ at every vertex. Secondly, for each pair
$x \neq y$ of vertices, there is exactly one edge $x \longrightarrow
y,$ or $y \longrightarrow x,$ in $\Gamma_{\textbf{N}}$. Indeed, $x
\longrightarrow y, y \longrightarrow x \in E(\Gamma_{\textbf{N}})$
would imply that $\Gamma_{\textbf{N}}$ has intersecting cycles,
which is impossible. Moreover if there is no edge connecting $x$ and
$y$ in $\Gamma_{\textbf{N}}$ this would imply that both $x
\longrightarrow y$, $y \longrightarrow x$ are edges  of
$\Gamma_{\textbf{W}}$, hence $\Gamma_{\textbf{W}}$ has a cycle
$x\longrightarrow y\longrightarrow x,$  which contradicts
$gl.\dim A < \infty.$

We have shown that  $\Gamma_{\textbf{W}}$ is an acyclic oriented
graph with $\binom{n}{2}$ edges, and
 Remark \ref{remar_tournament} again implies that it  is an acyclic tournament.
\end{proof}
\begin{remark}
The implication (\ref{theor12}) $\;\Longrightarrow\;$ (\ref{theor11}) follows straightforwardly from a result of Anick, see \cite{Anick85} Theorem 6.
\end{remark}

\begin{prooftheorem1}
Assume now that $A  = \textbf{k}\langle X  \rangle /(\Re)$ is a
quadratic PBW algebra, with PBW generators $X=\{x_1, \cdots
x_n\}$. Let $\textbf{W }$ be the set of obstructions, and let
$A^0=\textbf{k}\langle X  \rangle /(\textbf{W})$ be the
corresponding monomial algebra. The set $\textbf{N}^{\infty}$,
(Notation \ref{notation1}) is a  $\textbf{k}$-basis  for both
algebras $A$ and $A^0$. As we have noticed before, the two
algebras have the same Hilbert series, equal degrees of growth,
and by Lemma \ref{Gldimlemma}, there is an equality $gl. \dim A =
gl. \dim A^0.$

(\ref{t1gldimpolgr})  $\Longrightarrow$ (\ref{t1perm}).
Suppose $A$ has finite global dimension and polynomial growth. Then the same is valid for
$A^0$. By Theorem \ref{monomialalgebrath}.\ref{theor13}
 there is a permutation $y_1, \cdots, y_n$ of $x_1 \cdots, x_n,$ such that
 \[\textbf{N}^{\infty}= \{y_1^{{\alpha}_1}\cdots y_n^{{\alpha}_n}\mid {\alpha}_i \geq 0, \; 1 \leq i \leq n\},\]
so $A$ has a $\textbf{k}$-basis of the desired form. (In general, it is not true that $\textbf{N}^{\infty}$
is \emph{a normal basis} for $A$ w.r.t. the deg-lex ordering $\prec$  defined via $y_1\prec\cdots \prec y_n$).

(\ref{t1perm}) $\Longrightarrow$ (\ref{t1hilbser}) is clear.

(\ref{t1hilbser}) $\Longrightarrow$ (\ref{t1gldimpolgr}) and
(\ref{t1hilbser}) $\Longrightarrow$ (\ref{t1gldim}).

Assume (\ref{t1hilbser}) holds.
Then obviously $A$ has polynomial growth of degree $n$.
The equalities
\[
H_{A^0}(z)= H_A(z)= \frac{1}{(1-z)^n}
\]
and  Theorem \ref{monomialalgebrath} imply that the monomial algebra $A^0$
has $gl.\dim A^0= n$,  and  $\mid W\mid = \binom{n}{2}.$  Clearly, then $A$ has $\binom{n}{2}$ relations and global dimension $n$.
This gives the implications (\ref{t1hilbser}) $\Longrightarrow$ (\ref{t1gldimpolgr}),
and (\ref{t1hilbser}) $\Longrightarrow$ (\ref{t1gldim})

Similarly, condition (\ref{t1gldim}) is satisfied simultaneously by
$A$ and $A^0$, so, by Theorem \ref{monomialalgebrath},
(\ref{t1hilbser}) holds. We have shown
(\ref{t1gldim})$\Longleftrightarrow$ (\ref{t1hilbser}).

The theorem has been proved.
\end{prooftheorem1}

\section{Combinatorics in quantum binomial sets}
\label{section_combinatorics}
In this section $(X,r)$ is a finite quantum binomial set.

When we study the monoid $S=S(X,r)$,   or the algebra $\Acal =
\Acal(\textbf{k}, X,r) \simeq \textbf{k }[ S]$ associated with
$(X,r)$ (see Definition \ref{associatedobjects}), it is convenient
to use the action of the infinite groups, $\Dcal_m(r)$, generated
by maps associated with the quadratic relations, as follows. We
consider the  bijective maps
\[
r^{ii+1}: X^m \longrightarrow X^m, \; 1 \leq i \leq m-1,
\quad\text{where}\quad r^{ii+1}=Id_{X^{i-1}}\times r\times
Id_{X^{m-i-1}}.
\]
Note that these maps are elements of the symmetric group $\Sym
(X^m)$. Then the group $\Dcal _m(r)$ generated by  $r^{ii+1}, \; 1
\leq i \leq m-1,$ acts on $X^m.$ $r$ is involutive, so the
bijective maps $r^{ii+1}$ are involutive, as well, and
$\Dcal_m(r)$ is the infinite group
\begin{equation}
\label{Dk}
 \Dcal_m(r)=\: _{\rm{gr}} \langle r^{ii+1}\mid\quad (r^{ii+1})^2= e,\quad  1 \leq i \leq m-1 \rangle. \end{equation}

When $m=3$, we  use notation $\Dcal=\Dcal_3(r)$. Note that
\[\Dcal=\: _{\rm{gr}} \langle r^{ii+1}\mid\quad (r^{ii+1})^2=
e,\quad  1 \leq i \leq 2 \rangle\]
is simply \emph{the infinite
dihedral group}.

\emph{The problem of equality of words} in the monoid $S=S(X,r)$ and
in the quadratic algebra $\Acal$ is solvable. Two elements $\omega,
\omega^{\prime}\in \langle X\rangle$ are equal in $S$ \emph{iff}
they have the same length, $|\omega|= |\omega^{\prime}|=m$ and
belong to the same orbit of $\Dcal_m(r)$ in $X^m.$

The action of the infinite dihedral group  $\Dcal$ on $X^3$ is of particular importance in this
section. Assuming that $(X,r)$ is a quantum binomial set we will
find some counting formulae and inequalities involving the orders of
the $\Dcal$-orbits in $X^3$, and their number, see Proposition
\ref{proposition1}. These are used to find a necessary and
sufficient conditions   for $(X,r)$ to be a symmetric set,
 Proposition \ref{proposition2}, and to give upper bounds for
$\dim A_3$ and $\dim A^{!}_3$ in the general case of quantum binomial algebra $A$,  Corollary \ref{bounddimA3cor}.

As usual, the orbit of a monomial $\omega \in X^3$ under the  action of $\Dcal$
will be denoted by  $\Ocal= \Ocal(\omega)$.

Denote by $\Delta_i$ the diagonal of $X^{\times i}, 2\leq i \leq 3$.
One has $\Delta_3 = (\Delta_2 \times X) \bigcap (X \times \Delta_2)$.
\begin{definition}
\label{squarefreeorbitdef}
We  call a $\Dcal$-orbit $\Ocal$ \emph{square-free} if
\[\Ocal\bigcap (\Delta_2 \times X \bigcup X \times \Delta_2) = \emptyset.\]
A monomial $\omega\in X^3$ is \emph{square-free} in $S$ if its
orbit $\Ocal(\omega)$ is square-free.
\end{definition}
\begin{remark}
\label{remark_actions}
We recall that whenever $(X,r)$ is a quadratic set, the left and the right "actions"
\[{}^z{\bullet} : X\times X \longrightarrow X\quad\text{and}\quad \bullet^z : X\times X \longrightarrow X\] induced by $r$
reflect each property of $r$, see \cite{TSh08}, and \cite{T10},
Remark 2.1. We will need the following properties of the actions.
Assume that $r$ is square-free and nondegenerate, then
\begin{equation}
\label{rightactioneq1}
\begin{array}{lllll}
{}^zt={}^zu &\Longrightarrow &t = u &\Longleftarrow & t^z= u^z
\\
{}^zt = z&\Longleftrightarrow &t = z &\Longleftrightarrow &t^z =
z.
\end{array}
\end{equation}
\end{remark}

\begin{lemma}
\label{squarefreeorbitslemma}
Let $(X,r)$ be a quantum binomial set, and let $\Ocal$ be a square-free $\Dcal$-orbit in $X^3$.
Then $|\Ocal| \geq 6.$
\end{lemma}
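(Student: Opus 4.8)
The plan is to study $\Ocal$ as an orbit of the infinite dihedral group $\Dcal = \Dcal_3(r) = \langle r^{12}, r^{23}\rangle$, using that $r^{12}$ and $r^{23}$ are involutions. The first step is to observe that a square-free orbit contains no fixed point of $r^{12}$ and no fixed point of $r^{23}$: writing $\omega = xyz \in \Ocal$ we have $x \neq y$ and $y \neq z$ (otherwise $\omega \in \Delta_2 \times X$ or $\omega \in X\times\Delta_2$), and since $r^{12}(xyz) = ({}^xy,\, x^y,\, z)$, the equality $r^{12}\omega = \omega$ would force ${}^xy = x$, hence $y = x$ by (\ref{rightactioneq1}) — a contradiction; the case of $r^{23}$ is symmetric. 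Consequently the graph on the vertex set $\Ocal$ whose edges are the pairs $\{v, r^{12}v\}$ and $\{v, r^{23}v\}$ $(v \in \Ocal)$ has no loops and is $2$-regular, with one $r^{12}$-edge and one $r^{23}$-edge at every vertex; being finite and connected it is a single cycle whose edges alternate between the two types, so $|\Ocal|$ is even.

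It then remains to exclude $|\Ocal| = 2$ and $|\Ocal| = 4$. If $|\Ocal| = 2$ then $r^{12}\omega = r^{23}\omega$; comparing first coordinates in $({}^xy,\, x^y,\, z) = (x,\, {}^yz,\, y^z)$ gives ${}^xy = x$, so $x = y$ by (\ref{rightactioneq1}), contradicting squarefreeness. If $|\Ocal| = 4$, order the cycle as $\omega_0 \xrightarrow{r^{12}} \omega_1 \xrightarrow{r^{23}} \omega_2 \xrightarrow{r^{12}} \omega_3 \xrightarrow{r^{23}} \omega_0$, so that also $\omega_3 = r^{23}\omega_0$ and $\omega_2 = r^{23}\omega_1 = r^{12}\omega_3$. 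Writing $\omega_0 = xyz$ and computing one gets
\[
\omega_1 = ({}^xy,\ x^y,\ z),\qquad \omega_3 = (x,\ {}^yz,\ y^z),
\]
\[
r^{23}\omega_1 = \bigl({}^xy,\ {}^{(x^y)}z,\ (x^y)^z\bigr) = \omega_2 = r^{12}\omega_3 = \bigl({}^{x}({}^yz),\ x^{({}^yz)},\ y^z\bigr).
\]
Equating third coordinates gives $(x^y)^z = y^z$, hence $x^y = y$ by nondegeneracy (the right action by $z$ is injective), and then (\ref{rightactioneq1}) forces $x = y$, again contradicting that $\Ocal$ is square-free. Therefore $|\Ocal| \neq 2, 4$, and together with $|\Ocal|$ even this yields $|\Ocal| \geq 6$.

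I expect the case $|\Ocal| = 4$ to be the only real obstacle: one has to follow carefully how $r^{12}$ and $r^{23}$ act coordinate-wise around the $4$-cycle and then isolate the single coordinate comparison which, via nondegeneracy and the square-free identities (\ref{rightactioneq1}), produces a contradiction. The remaining ingredients — the parity of $|\Ocal|$ and the case $|\Ocal| = 2$ — are routine once the fixed-point-free property of $r^{12}, r^{23}$ on a square-free orbit has been established.
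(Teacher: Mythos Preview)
Your proof is correct, but it follows a genuinely different route from the paper's. The paper argues directly: starting from $\omega = xyz$, it writes down the six monomials $v_1,\dots,v_6$ appearing in the Yang--Baxter diagram, groups them into three pairs $U_1,U_3,U_5$ according to their third coordinate (``tail''), observes that each pair has two elements because $r^{12}$ fixes no square-free word, and then checks that the three tails $z$, $y^z$, $(x^y)^z$ are pairwise distinct using nondegeneracy and (\ref{rightactioneq1}). This exhibits six distinct elements of $\Ocal$ explicitly.

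Your argument is more structural: you use that $r^{12}$ and $r^{23}$ act without fixed points on a square-free orbit to realize $\Ocal$ as a single alternating $2$-regular cycle, which forces $|\Ocal|$ to be even, and then you eliminate $|\Ocal|=2$ and $|\Ocal|=4$ by coordinate comparisons. The coordinate check you use for $|\Ocal|=4$ --- equating third coordinates to get $(x^y)^z = y^z$, hence $x^y=y$, hence $x=y$ --- is essentially the same computation the paper does to separate the tails $y^z$ and $(x^y)^z$; so the algebraic core is shared, but the packaging is different. Your approach buys the extra information that every square-free orbit has even order (a fact the paper does not isolate here), at the cost of a small detour through the graph/cycle description. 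The paper's approach is slightly more direct and ties in visually with the Yang--Baxter diagram used elsewhere.
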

\begin{proof}
Suppose  $\Ocal= \Ocal(xyz)$ is a square-free orbit. Consider the set \[ O_1=  \{ v_i \mid 1 \leq i \leq 6 \}\subseteq \Ocal\] consisting of the first six elements of the  "Yang-Baxter" diagram
\begin{equation}
\label{ybediagram}
\begin{CD}
v_1= xyz \quad\quad\quad @>r^{12}>> \quad\quad\quad ({{}^xy}x^y)z = v_2\\
@V  r^{23} VV @VV r^{23} V\\
  v_3=x({{}^yz}y^z)@. ({{}^xy}) ({}^{x^y}z)(x^y)^z= v_5\\
@V r^{12} VV @VV r^{12} V\\
\kern -80pt v_4={}^{x}{({}^yz)}(x^{{}^yz})(y^z) @.
\kern-100pt \quad\quad \quad\quad\quad\quad\quad\quad\quad\quad    [{}^{{}^xy}{({}^{x^y}z)}][({}^xy)^{({}^{x^y}z)}][(x^y)^z]= v_6.
 \end{CD}
\end{equation}
Clearly,
\[
O_1 = U_1\bigcup U_3\bigcup U_5, \quad\text{where}\quad U_j = \{v_j, \; r^{12}(v_j) = v_{j+1}\},\quad j = 1,3,5.
\]
We claim that $U_1, U_3, U_5$ are pairwise disjoint sets, and each of them has order 2.
Note first that since  $v_j$ is  a square-free monomial, for each $j = 1, 3, 5$, one has
$v_j\neq r_{12}(v_j)= v_{j+1}$,
therefore
\[
|U_j| = 2, \;\; j = 1,3,5.
\]
The monomials in each $U_j$ have the same "tail". More precisely,
$v_1 = (xy)z, v_2=r(xy)z$, have a "tail" $z$, the tail of $v_3$,
and $v_4$ is $y^z$, and the  tail of $v_5$, and $v_6$ is
$(x^y)^z$. It will be enough to show that the three elements $z,
y^z,  (x^y)^z \in X$ are pairwise distinct. But $\Ocal(xyz)$ is
square-free, so $y\neq z$ and by (\ref{rightactioneq1}) $y^z \neq
z$.
 Furthermore $v_2 = ({}^xy)(x^y)z \in \Ocal(xyz)$ and therefore, $x^y \neq y$ and $x^y \neq z$. Now by (\ref{rightactioneq1}) one has
\[\begin{array}{lll}
x^y \neq z &\Longrightarrow &(x^y)^z \neq z\\
x^y \neq y &\Longrightarrow & (x^y)^z \neq y^z.
\end{array}
\]
We have shown that the three elements $z, y^z, (x^y)^z \in X$
occurring as tails in $U_1, U_3, U_5$, respectively,  are pairwise
distinct, so the three sets are pairwise disjoint. This implies
$|O_1|= 6$, and therefore $|\Ocal| \geq 6.$
\end{proof}
\begin{proposition}
\label{proposition1} Suppose $(X,r)$ is a finite quantum
binomial set.  Let  $\Ocal$  be a $\Dcal$-orbit in $X^3$, denote $\textbf{E(\Ocal)} = \Ocal
 \bigcap((\Delta_2 \times X\bigcup X \times \Delta_2)\backslash \Delta_3)$ .
\begin{enumerate}
\item
\label{prop11}
The following implications hold.
\label{orb1}
\[
\begin{array}{lllll}
&\emph{\textbf{(i)}}&\quad \Ocal \bigcap \Delta_3 \neq \emptyset&\Longrightarrow& |\Ocal|=1.\\
&&&&\\
&\emph{\textbf{(ii)}}&\quad \textbf{E(\Ocal)}
\neq \emptyset & \Longrightarrow& |\Ocal| \geq 3 \quad \text{and}\quad |\textbf{E(\Ocal)}|=2.
  \end{array}
  \]
\\
In this case we say that \emph{\emph{$\Ocal$ is an orbit of type \textbf{\textbf{(ii)}}}}\\
\[
\begin{array}{lllll}
&\emph{\textbf{(iii)}}&\quad \Ocal
 \bigcap(\Delta_2 \times X\bigcup X \times \Delta_2) = \emptyset&
 \Longrightarrow & |\Ocal| \geq 6.
  \end{array}
\]
\item \label{prop12} There are exactly $n(n-1)$ orbits $\Ocal$ of
type \emph{\textbf{(ii)}} in $X^3$. \item \label{prop1cc} $(X,r)$
satisfies the cyclic condition \emph{iff} each orbit $\Ocal$ of
type \emph{\textbf{(ii)}} has order $|\Ocal| = 3.$ In this case
\[
{}^{x^y}y = {}^xy\quad\text{and}\quad  x^{{}^xy}=x^y , \; \forall
x,y \in X.
\]
\end{enumerate}
\end{proposition}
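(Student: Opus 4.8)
The plan is to study the $\Dcal$-orbits in $X^3$ through the combinatorics of the infinite dihedral group $\Dcal=\langle r^{12},r^{23}\rangle$, whose two generators are involutions. The first step records, for $v=(p,q,s)\in X^3$, the two equivalences
\[
r^{12}(v)=v\iff p=q\ (v\in\Delta_2\times X),\qquad r^{23}(v)=v\iff q=s\ (v\in X\times\Delta_2),
\]
which follow from square-freeness ($r(p,p)=(p,p)$) together with the nondegeneracy consequences in (\ref{rightactioneq1}) (if $p\neq q$ then ${}^pq\neq p$, so $r^{12}(v)\neq v$); intersecting them, $v$ is fixed by both generators iff $v\in\Delta_3$. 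The second step is the standard structural dichotomy for a dihedral action on a finite set: putting the edge $\{v,r^{12}v\}$ whenever $r^{12}v\neq v$ and likewise for $r^{23}$, each $\Dcal$-orbit $\Ocal$ becomes a connected graph of maximum degree $2$, hence is either a single point (fixed by both generators), an even-length ``cycle'' on which neither generator has a fixed point, or a ``path'' $v_1\!-\!v_2\!-\!\cdots\!-\!v_k$ whose edges alternate between $r^{12}$- and $r^{23}$-edges, with each of the two endpoints fixed by exactly one generator and every interior vertex fixed by neither. This dichotomy drives the whole proof.

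Part (1) then follows. (i): a point of $\Delta_3$ is fixed by both generators, so its orbit is a singleton. (iii) is precisely Lemma~\ref{squarefreeorbitslemma}, since a $\Dcal$-orbit disjoint from $\Delta_2\times X\cup X\times\Delta_2$ is exactly a square-free orbit. For (ii), choose $v_1\in\textbf{E(\Ocal)}$; by the left/right symmetry it suffices to treat $v_1=xxz$ with $x\neq z$. Then $v_1\in(\Delta_2\times X)\setminus\Delta_3$ is $r^{12}$-fixed but not $r^{23}$-fixed, so $\Ocal$ is a path with $v_1$ an endpoint. It is not a $2$-path: the second endpoint of a $2$-path would again be $r^{12}$-fixed, i.e.\ $x\,{}^xz\,x^z$ with $x={}^xz$, which forces $x=z$ by (\ref{rightactioneq1}); hence $|\Ocal|\ge 3$. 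By (i) neither endpoint lies in $\Delta_3$, each endpoint lies in exactly one of $\Delta_2\times X$, $X\times\Delta_2$, and every interior vertex lies in neither; therefore $\textbf{E(\Ocal)}$ is exactly the pair of endpoints and $|\textbf{E(\Ocal)}|=2$. Part (2) is now a count: distinct orbits are disjoint, every element of $S:=(\Delta_2\times X\cup X\times\Delta_2)\setminus\Delta_3$ lies in a (necessarily type-(ii)) orbit, and conversely the sets $\textbf{E(\Ocal)}$ of the type-(ii) orbits partition $S$; since $|S|=(2n^2-n)-n=2n(n-1)$ and each $\textbf{E(\Ocal)}$ has size $2$, there are $n(n-1)$ type-(ii) orbits.

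For Part (3) I would use that, for a quantum binomial set, the cyclic condition is equivalent to the identities ${}^{x^y}y={}^xy$ and $x^{{}^xy}=x^y$ holding for all $x,y$ (cf.\ \cite{T04s}; the remaining cyclic identities follow). The key computation traces a type-(ii) orbit explicitly: from the endpoint $v_1=xxz$ ($x\neq z$) one gets $v_2=x\,{}^xz\,x^z$ and $v_3=r^{12}(v_2)={}^x({}^xz)\cdot x^{{}^xz}\cdot x^z$, and $\Ocal$ has order exactly $3$ iff $v_3$ is the other (hence $r^{23}$-fixed) endpoint, i.e.\ iff $x^{{}^xz}=x^z$; the mirror computation from an endpoint $xzz$ ($x\neq z$) gives order $3$ iff ${}^{x^z}z={}^xz$. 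If every type-(ii) orbit has order $3$, then, since each $xxz$ and each $xzz$ with $x\neq z$ is the relevant endpoint of some type-(ii) orbit, both identities hold for $x\neq z$, hence (trivially for $x=y$) for all $x,y$ — giving the cyclic condition and the two displayed formulas. Conversely, given the two identities, any type-(ii) orbit is a path with an endpoint of the form $xxz$ or $xzz$ ($x\neq z$), and the relevant identity forces its $v_3$ to be the opposite endpoint, so the orbit has order $3$.

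I expect the main obstacle to be Part (1)(ii): correctly identifying that a $\Dcal$-orbit which meets $\Delta_2\times X\cup X\times\Delta_2$ but not $\Delta_3$ is exactly a ``path'' of the dihedral action, and that its intersection with that locus is precisely its two endpoints. Once the fixed-point translation and the path/cycle dichotomy are set up carefully, Parts (2) and (3) become bookkeeping on top of that picture, with the orbit-tracing computation in Part (3) routine but requiring the nondegeneracy identities (\ref{rightactioneq1}) at each cancellation.
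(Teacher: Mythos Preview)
Your proposal is correct and follows essentially the same route as the paper. The paper argues (ii) by explicitly walking the orbit from an endpoint $xxy$ via the alternating sequence $r^{23},r^{12},r^{23},\ldots$, checking by hand that the first three elements are distinct and that the walk must terminate at a second element of the shape $aac$ or $acc$; you phrase the same thing as the standard path/cycle dichotomy for an action of the infinite dihedral group on a finite set, and then read off that $\textbf{E}(\Ocal)$ is the pair of path-endpoints. Your framing is a bit cleaner conceptually, but the content (fixed-point characterizations via square-freeness and (\ref{rightactioneq1}), the orbit-tracing, the $2n(n-1)/2$ count, and the explicit computation for Part~(3)) is identical to the paper's; in fact the paper disposes of Part~(3) in one line (``follows straightforwardly from (\ref{orb2})''), whereas you spell out the two identities arising from the $xxz$- and $xzz$-endpoints, which is exactly what that sentence unpacks to.
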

\begin{proof}
Clearly, the "fixed" points under the action of $\Dcal$ on $X^3$
are exactly the monomials $xxx,$ $x \in X,$ this gives
\textbf{(i)}.

Assume now that $\Ocal$ is of type  \textbf{(ii)}. Then it
contains an element of the shape $\omega = xxy,$ or $\omega =
xyy$, where $x, y \in X, x \neq y.$ Without loss of generality we
can assume $\omega = xxy \in \Ocal.$

The orbit $\Ocal (\omega)$  can be obtained
as follows.
We fix as an initial element of the orbit $\omega = xxy.$  Then there is \emph{a unique  finite sequence}
   $r^{23}, r^{12}, r^{23}, \cdots,$  which exhaust the whole orbit, and produces  a "new" element at every step.
    $r$ is involutive and square-free, thus in order to produce new elements at every step, the sequence must start with $r^{23}$ and at every next step we have to alternate the actions
   $r^{23}$, and $r^{12}$.

   We look at the "Yang-Baxter" diagram starting with $\omega$ and exhausting the whole orbit (without repetitions).
   \begin{equation}
\label{orb2}
\omega=\omega_1 = xxy \longleftrightarrow^{r^{23}} \omega_2 = x ({}^xy) (x^y) \longleftrightarrow^{r^{12}}  \omega_3= ({}^{x^2}y) (x^{{}^xy}) (x^y) \longleftrightarrow \cdots \longleftrightarrow \omega_m.
\end{equation}
Note first that the first three elements $\omega_1, \omega_2,
\omega_3$ are distinct monomials in $X^3$. Indeed, $x \neq y$
implies $r(xy)\neq xy$ in $X^2$, so $\omega_2 \neq \omega_1$. By
assumption $(X,r)$ is square-free, so ${}^xx=x$, but by the
nondegeneracy $y \neq x,$ also implies  ${}^xy \neq x.$  So
$r(x({}^xy))\neq x({}^xy),$ and therefore $\omega_3 \neq
\omega_2.$ Furthermore, $\omega_3 \neq \omega_1.$ Indeed, if we
assume $x = {}^{x^2}y = {}^x{({}^xy)}$ then by
(\ref{rightactioneq1}) one has ${}^xy = x$, and therefore $y = x,$
a contradiction. We have obtained that $\mid\Ocal\mid \geq 3.$

We claim now that the intersection $\textbf{E}= \textbf{E(\Ocal)}$
contains exactly two element.
We analyze the diagram (\ref{orb2}) looking from left to right.

Suppose we have made $k-1$ "steps" to the right obtaining new elements, so we have obtained
\[
\omega_1 = xxy \longleftrightarrow^{r^{23}} \omega_2 = x {}^xy x^y
\longleftrightarrow^{r^{12}}  \omega_3= {}^{x^2}y x^{{}^xy} x^y
\longleftrightarrow \cdots\longleftrightarrow
\omega_{k-1}\longleftrightarrow^{r^{ii+1}} \omega_k,
\]
where $\omega_1, \omega_2, \cdots, \omega_k$ are pairwise distinct.
Note that all elements $\omega_s$, $\; 2 \leq s \leq k-1,$ have the
shape $\omega_s= a_sb_sc_s,$ with $a_s\neq b_s$ and $b_s\neq c_s$.
Two cases are possible.

\textbf{(a)} $\omega_k = a_kb_kc_k,$ with $a_k\neq b_k$ and $b_k\neq
c_k$, then applying $r^{jj+1}$ (where $j=2$ if  $i = 1$ and  $j = 1$
if $i=2$), we obtain a new element $\omega_{k+1}$ of the orbit.

\textbf{(b) }
 $\omega_k = aac,$ or  $\omega_k = acc,$ $a\neq c$.
 In this case  $r^{jj+1} $ with $j \neq i$ keeps $\omega_k$ fixed,
 so the process of obtaining new elements of the orbit stops at this step and the diagram is complete.

 But our diagram is finite, so  as a final step on the right it has to "reach" some $\omega_m = aac,$ or $\omega_m = acc,$ $a\neq c$ (we have already shown that $m \geq 3$). Note that  $\omega_m \neq \omega_1$.
Hence the intersection $\textbf{E}= \textbf{E}(\Ocal)$, contains
exactly two elements.

Condition
\textbf{(iii)}  follows from Lemma \ref{squarefreeorbitslemma}.
(\ref{prop11}) has been verified.

We claim that there exists exactly $n(n-1)$ orbits of type \textbf{(ii)}.
Indeed, let $\Ocal_1, \cdots \Ocal_p$ be all orbits of type \textbf{(ii)}.
The intersections
$E_i= E(\Ocal _i), 1 \leq i \leq p,$
are disjoint sets and each of them contains two elements. Now the equalities
\[
\begin{array}{lll}
\bigcup_{1 \leq i \leq p} E_i &= &(\Delta_2 \times X\bigcup X \times \Delta_2)\backslash \Delta_3\\
&&\\
\mid E_i\mid = 2, &\quad& \mid(\Delta_2 \times X\bigcup X \times \Delta_2)\backslash \Delta_3\mid= 2n(n-1)\\
\end{array}
\]
imply $p = n(n-1)$. This verifies (\ref{prop12})

Condition (\ref{prop1cc}) follows straightforwardly  from (\ref{orb2}).
 \end{proof}
\begin{example}
\label{example2_orbits} Consider the quantum binomial algebra $A$
given in example \ref{example2}. Let $(X,r)$ be the associated
quadratic set, $S = S(X,r)$ the corresponding monoid. The relations
are semigroup relations, so there is an algebra isomorphism $A =
\Acal(\textbf{k}, X, r) \simeq \textbf{k} S.$ We will find the
corresponding $\Dcal$ orbits in $X^3$. There are 12 orbits of type
\textbf{(ii)}. This agrees with Proposition \ref{proposition2}.
 \[\begin{array}{l}
 \begin{CD}
 \Ocal_1=xxy @>r^{23}>> xzt@>r^{12}>>yxt@>r^{23}>> ytx @>r^{12}>>tzx @>r^{23}>>tty
 \end{CD}
 \\
 \\
\begin{CD}
  \Ocal_2=  xxz  @>r^{23}>>xyx @>r^{12}>> ztx@>r^{23}>> zxt@>r^{12}>>tyt@>r^{23}>> ttz
  \end{CD}
 \\
 \\
 \begin{CD}
 \Ocal_3= yxx@>r^{12}>>xzx@>r^{23}>>xty @>r^{12}>>txy@>r^{23}>> tzt @>r^{12}>>ytt
 \end{CD}
 \\
 \\
\begin{CD}
 \Ocal_4=  zxx@>r^{12}>>tyx@>r^{23}>>txz @>r^{12}>>xtz@>r^{23}>> xyt @>r^{12}>> ztt
  \end{CD}
 \end{array}
  \]
  \[
  \begin{array}{ll}
   \\
   &
   \\
 \begin{CD}
  \Ocal_5=xyy@>r^{12}>>zty
 @>r^{23}>>zxz;
  \end{CD}&
 \begin{CD}
 \Ocal_6=xzz @>r^{12}>>yxz
 @>r^{23}>>yyx\end{CD}
 \\
 &
 \\
\begin{CD} \Ocal_7=
 tyy@>r^{12}>>zxy@>r^{23}>>zzt;
 \end{CD}&
\begin{CD}
  \Ocal_8=tzz@>r^{12}>>ytz@>r^{23}>>yyt
 \end{CD} \\& \\
\begin{CD}
  \Ocal_9=txx @>r^{12}>>xtx@>r^{23}>>xxt; \end{CD} &
\begin{CD}
  \Ocal_{10}=xtt@>r^{12}>>txt@>r^{23}>>ttx
 \end{CD}
 \\&
 \\
\begin{CD}
\Ocal_{11}=  yzz@>r^{12}>>zyz@>r^{23}>>zzy;
 \end{CD} &
 \Ocal_{12}=\begin{CD}
 zyy @>r^{12}>>yzy@>r^{23}>>yyz
 \end{CD}
 \end{array}
\]
There are only two square-free orbits, $\Ocal^{(1)}=\Ocal(xyz)$, and $\Ocal^{(2)}=\Ocal(tyz)$. Each of them has order $6$.
 \begin{equation}
\label{ybediagram2}
\begin{CD}
 xyz  @>r^{12}>>  ztz\\
@V  r^{23} VV @VV r^{23} V\\
  xzy@. zyt\\
@V r^{12} VV @VV r^{12} V\\
yxy  @>r^{12}>>  yzt
 \end{CD}\quad\quad\quad\quad\quad\quad\quad\quad\quad \begin{CD}
 tyz  @>r^{12}>>  zxz\\
@V  r^{23} VV @VV r^{23} V\\
  tzy@. zyx\\
@V r^{12} VV @VV r^{12} V\\
yty  @>r^{12}>>  yzx
 \end{CD}
\end{equation}
The one element orbits are $\{xxx\}, \{yyy\}, \{zzz\}, \{ttt\}$.

More detailed study of the orbits shows that $A$ is \emph{not PBW w.r.t. any enumeration of $X$}.
Clearly, $r$ does not satisfy the braid relation, so  $(X,r)$ is not a symmetric set.
\end{example}

\begin{lemma}
\label{YBE=correctorderoforbits}
A quantum binomial set
$(X,r)$ is symmetric \emph{iff } the orders of $\Dcal$-orbits $\Ocal$ in $X^3$ satisfy the following two conditions.
\[
\begin{array}{lll}
\emph{(a)}&\quad \Ocal
 \bigcap(\Delta_2 \times X\bigcup X \times \Delta_2)\backslash \Delta_3)) \neq \emptyset & \Longleftrightarrow |\Ocal| =  3.\\
 & &\\
\emph{(b)}&\quad \Ocal
 \bigcap(\Delta_2 \times X\bigcup X \times \Delta_2) = \emptyset&
\Longleftrightarrow |\Ocal| = 6.
  \end{array}
\]
\end{lemma}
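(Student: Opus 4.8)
The plan is to turn the Yang--Baxter equation into a statement about the order of a single group element acting on each $\Dcal$-orbit in $X^3$, and then to read off the possible orbit sizes from Proposition \ref{proposition1}. Since $(X,r)$ is a quantum binomial set, $r$ is involutive, hence so are $r^{12}$ and $r^{23}$; thus $(X,r)$ is symmetric iff the braid relation $r^{12}r^{23}r^{12}=r^{23}r^{12}r^{23}$ holds on $X^3$, and a one-line computation using $(r^{12})^2=(r^{23})^2=\mathrm{id}$ rewrites this as $(r^{12}r^{23})^3=\mathrm{id}_{X^3}$. Since $r^{12}r^{23}\in\Dcal$ permutes every $\Dcal$-orbit and $X$ is finite, $(X,r)$ is symmetric if and only if, on each (finite) orbit $\Ocal\subseteq X^3$, the permutation $(r^{12}r^{23})|_{\Ocal}$ has order dividing $3$.

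Next I would describe each orbit together with the action of $r^{12}r^{23}$ on it, using the trichotomy of Proposition \ref{proposition1} and the identities (\ref{rightactioneq1}). The key point is that, for a quantum binomial set, $r(u,v)=(u,v)$ forces $u=v$, so $abc\in X^3$ is fixed by $r^{12}$ iff $a=b$ (i.e.\ $abc\in\Delta_2\times X$) and by $r^{23}$ iff $b=c$ (i.e.\ $abc\in X\times\Delta_2$). Viewing $\Ocal$ as a graph with exactly one $r^{12}$-edge and one $r^{23}$-edge at every vertex (loops recording the fixed points), a finite orbit must be one of three kinds: a single vertex $xxx$ carrying two loops; a path whose only loops sit at its two endpoints; or an even cycle with alternating $r^{12}/r^{23}$-edges and no loop. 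The orbits meeting $\Delta_3$ are precisely the first kind, so $|\Ocal|=1$ and $(r^{12}r^{23})|_{\Ocal}=\mathrm{id}$. The orbits with $\textbf{E}(\Ocal):=\Ocal\cap((\Delta_2\times X\cup X\times\Delta_2)\setminus\Delta_3)\neq\emptyset$ contain a non-diagonal fixed point of a generator, hence are of the path kind; their two endpoints are exactly the two elements of $\textbf{E}(\Ocal)$, and, listing $\Ocal=\{\omega_1,\dots,\omega_m\}$ along the path as in diagram (\ref{orb2}), a direct check on the two alternating matchings shows that $r^{12}r^{23}$ acts as a single $m$-cycle, where $m=|\Ocal|\geq 3$ by Proposition \ref{proposition1}. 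The square-free orbits have no loops, hence are even cycles, on which $r^{12}r^{23}$ acts as the rotation by two, of order $\tfrac{m}{2}$, where $m=|\Ocal|\geq 6$ by Lemma \ref{squarefreeorbitslemma}.

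It then follows that for a path orbit $(r^{12}r^{23})^3|_{\Ocal}=\mathrm{id}$ iff $m\mid 3$, i.e.\ (as $m\geq 3$) iff $m=3$, while for a square-free orbit it holds iff $\tfrac{m}{2}\mid 3$, i.e.\ (as $m\geq 6$) iff $m=6$. Combined with the reduction above, $(X,r)$ is symmetric iff every orbit with $\textbf{E}(\Ocal)\neq\emptyset$ has order $3$ and every square-free orbit has order $6$; these are exactly the $\Rightarrow$ directions in (a) and (b). The $\Leftarrow$ directions come for free from the trichotomy: an orbit of order $3$ is neither a singleton nor square-free (square-free orbits have order $\geq 6$), so it has $\textbf{E}(\Ocal)\neq\emptyset$; and, granting (a), an orbit of order $6$ is neither a singleton nor of type \textbf{(ii)} (those now have order $3$), hence is square-free. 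Conversely, if (a) and (b) both hold, then on each orbit $r^{12}r^{23}$ is the identity, a $3$-cycle, or a rotation of order $3$, so its cube is $\mathrm{id}_{X^3}$ and $(X,r)$ is symmetric.

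The one genuinely non-trivial step is the structural description in the middle paragraph — that every $\Dcal$-orbit in $X^3$ is a point, a path, or an even cycle, and that $r^{12}r^{23}$ acts as indicated. This is a careful but elementary analysis of the same "Yang--Baxter diagram" bookkeeping already carried out in the proofs of Proposition \ref{proposition1} and Lemma \ref{squarefreeorbitslemma}; once it is in place, the conclusion is immediate from Proposition \ref{proposition1} and the divisibility arithmetic above.
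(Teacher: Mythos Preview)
Your argument is correct. The key reformulation---that for involutive $r$ the braid relation is equivalent to $(r^{12}r^{23})^3=\Id_{X^3}$---together with the standard structure theory of orbits under a group generated by two involutions (point, path, or even cycle) gives exactly the orbit-size constraints claimed. Your cycle computations check out: on a path orbit with $m$ vertices $r^{12}r^{23}$ is a single $m$-cycle, and on an even $m$-cycle it is rotation by two, of order $m/2$; combined with Proposition~\ref{proposition1} and Lemma~\ref{squarefreeorbitslemma} this forces $m=3$ and $m=6$ respectively. Your handling of the $\Leftarrow$ directions of (a) and (b) is also correct, and you rightly observe that the $\Leftarrow$ in (a) holds unconditionally while the $\Leftarrow$ in (b) uses the already-established $\Rightarrow$ in (a).

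By way of comparison: the paper's proof is the single line ``Look at the corresponding YBE diagrams,'' i.e.\ it asks the reader to stare at diagrams (\ref{ybediagram}) and (\ref{orb2}) and observe directly that the two branches of the hexagon close up precisely when the orbit has $6$ elements, and that the type~\textbf{(ii)} chain terminates after three steps precisely when the cyclic condition holds. Your route is more structural: you replace the diagram chase by the identity $(r^{12}r^{23})^3=\Id$ and a classification of $\Dcal$-orbits, which makes the role of the infinite dihedral group explicit and turns the verification into a divisibility statement. The paper's approach is quicker once one has the diagrams in hand; yours explains \emph{why} the numbers $3$ and $6$ appear and would generalise more readily (e.g.\ to longer words or other Coxeter-type relations).
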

\begin{proof}
Look at the corresponding YBE diagrams
\end{proof}
Let $(X,r)$ be a quantum binomial set, let  $\Dcal$ be the infinite dihedral group
acting on $X^3$. We fix the following notation for the $\Dcal$-orbits in $X^3$.
\begin{notation}
\label{notation2}
We denote by  $\Ocal_i, 1 \leq i \leq n(n-1)$ the orbits of type \textbf{(ii)}, and by
$\Ocal^{(j)}, 1 \leq j \leq q$ all
square-free orbits in  $X^3$. The remaining $\Dcal$-orbits in $X^3$ are the one-element orbits
$\{xxx\}, x \in X$, their union is $\Delta_3$.
\end{notation}

\begin{proposition}
\label{proposition2}
Let  $(X,r)$ be a finite quantum
binomial set. Let $\Ocal^{(j)}, 1 \leq j\leq q,$ be the set of all (distinct) square-free $\Dcal$-orbits in $X^3.$ Then
\begin{enumerate}
\item
\label{prop21}
  $q\leq \binom{n}{3}.$
\item
\label{prop22}
$(X,r)$ is a symmetric set \emph{iff} $\;q =  \binom{n}{3}$.
\end{enumerate}
\end{proposition}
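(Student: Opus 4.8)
The plan is to deduce both statements from the partition of $X^3$ into $\Dcal$-orbits together with the orbit-size estimates already established. First I would pin down the partition. By Proposition \ref{proposition1}(\ref{prop11})(i) the orbits meeting $\Delta_3$ are exactly the singletons $\{xxx\}$, $x\in X$; moreover an orbit which meets $\Delta_2\times X\cup X\times\Delta_2$ but is not such a singleton must meet $(\Delta_2\times X\cup X\times\Delta_2)\setminus\Delta_3$ — otherwise it would meet $\Delta_3$ and hence be a singleton — so it has $\textbf{E}(\Ocal)\neq\emptyset$ and is of type \textbf{(ii)}; all remaining orbits are square-free. Using Proposition \ref{proposition1}(\ref{prop12}) for the count $n(n-1)$ of type-\textbf{(ii)} orbits and Notation \ref{notation2} for the $q$ square-free orbits $\Ocal^{(j)}$, this gives the disjoint decomposition
\[
X^3 \;=\; \Delta_3 \;\sqcup\; \Big(\bigsqcup_{i=1}^{n(n-1)}\Ocal_i\Big)\;\sqcup\;\Big(\bigsqcup_{j=1}^{q}\Ocal^{(j)}\Big).
\]

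Next I would take cardinalities. Since $|\Delta_3|=n$, since $|\Ocal_i|\geq 3$ for each $i$ by Proposition \ref{proposition1}(\ref{prop11})(ii), and since $|\Ocal^{(j)}|\geq 6$ for each $j$ by Proposition \ref{proposition1}(\ref{prop11})(iii) (equivalently Lemma \ref{squarefreeorbitslemma}), we obtain
\[
n^3 \;=\; n \;+\; \sum_{i=1}^{n(n-1)}|\Ocal_i|\;+\;\sum_{j=1}^{q}|\Ocal^{(j)}|\;\geq\; n + 3n(n-1) + 6q .
\]
Rearranging and using the identity $n^3-n-3n(n-1)=n(n-1)(n-2)=6\binom{n}{3}$ gives $q\leq\binom{n}{3}$, which is (\ref{prop21}).

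Finally, for (\ref{prop22}): if $(X,r)$ is symmetric, Lemma \ref{YBE=correctorderoforbits} forces $|\Ocal_i|=3$ for all $i$ and $|\Ocal^{(j)}|=6$ for all $j$, so the inequality above becomes an equality, $6q=n(n-1)(n-2)$, i.e. $q=\binom{n}{3}$. Conversely, if $q=\binom{n}{3}$ then $6q=n(n-1)(n-2)$ and the chain of (in)equalities collapses; as every $|\Ocal_i|$ is at least $3$ and every $|\Ocal^{(j)}|$ is at least $6$, equality of the totals is possible only when $|\Ocal_i|=3$ for all $i$ and $|\Ocal^{(j)}|=6$ for all $j$, and then $(X,r)$ is symmetric by Lemma \ref{YBE=correctorderoforbits}. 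I expect no real obstacle here; the only points requiring care are that the three families of orbits genuinely partition $X^3$ and that the lower bounds $3$ and $6$ on the orbit sizes are attained simultaneously precisely in the symmetric case, which is exactly what the tight arithmetic identity $n^3-n-3n(n-1)=6\binom{n}{3}$ encodes.
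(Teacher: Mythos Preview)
Your proposal is correct and follows essentially the same route as the paper's own proof: partition $X^3$ into the singleton orbits in $\Delta_3$, the $n(n-1)$ type-\textbf{(ii)} orbits, and the square-free orbits, count cardinalities using the lower bounds $3$ and $6$ from Proposition~\ref{proposition1}, and invoke Lemma~\ref{YBE=correctorderoforbits} for the equality case. If anything, you are slightly more careful than the paper in justifying why these three families exhaust the orbits and in spelling out both directions of (\ref{prop22}).
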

\begin{proof}
Clearly, $X^3$ is a disjoint union
\[X^3 = \Delta_3 \bigcup_{1 \leq i \leq n(n-1)}\Ocal_i \bigcup_{1 \leq j \leq q} \Ocal^{(j)} .\]
Thus
\begin{equation}
\label{countingformulae2}
\mid X^3\mid = \mid \Delta_3\mid  + \sum_{1 \leq i \leq n(n-1)} \mid\Ocal_i\mid +\sum_{1 \leq j \leq q} \mid\Ocal^{(j)}\mid
\end{equation}
Denote $m_i = |\Ocal_i|,\; 1 \leq i \leq n(n-1)$, $\; n_j = |\Ocal^{(j)}|, \; 1 \leq j \leq q.$
By Proposition \ref{proposition1} one has
\[m_i \geq 3, \;1 \leq i \leq n(n-1),\quad\text{and}\quad n_j \geq 6, \;1 \leq j \leq q.\]
We replace these inequalities in (\ref{countingformulae2}) and obtain
\begin{equation}
\label{countingformulae3}
n^3 = n + \sum_{1 \leq i \leq n(n-1)} m_i + \sum_{1 \leq j \leq q} n_j \geq  \; n + 3n(n-1) + 6 q.
\end{equation}
So \[q \leq \frac{n^3 -3n^2 +2n}{6}= \binom{n}{3},\] which verifies
(\ref{prop21}). Assume now $q= \binom{ n}{3}$. Then
(\ref{countingformulae3}) implies
\[
n^3 = n + \sum_{1 \leq i \leq n(n-1)} m_i + \sum_{1 \leq j \leq \binom{n}{3}} n_j
 \geq  \; n + 3n(n-1) + 6\binom{n}{3} = n^3.
\]
This is possible \textbf{iff } the following equalities hold
\begin{equation}
\label{countingformulae4}
\begin{array}{l}
m_i= |\Ocal_i| = 3, \;1 \leq i \leq n(n-1),\\
\\
n_j = |\Ocal^{(j)}| = 6, \; 1 \leq j \leq q.\\
 \end{array}
 \end{equation}
By Lemma \ref{YBE=correctorderoforbits}, the equalities  (\ref{countingformulae3}) hold \emph{iff  } $(X,r)$ is  a symmetric set.
\end{proof}
\begin{corollary}
\label{cor1_prop2} Let  $(X,r)$ be a finite quantum binomial set.
$(X,r)$ is a symmetric set \emph{iff} the associated quadratic algebra $\Acal= \Acal(\textbf{k}, X,r)$
satisfies \[\dim
\Acal_3= \binom{n+2}{3}.\]
 \end{corollary}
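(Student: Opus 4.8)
The plan is to reduce the statement to a counting of $\Dcal$-orbits in $X^3$ and then invoke Proposition \ref{proposition2}. The starting point is the identification $\Acal = \Acal(\textbf{k},X,r)\simeq \textbf{k}\,S(X,r)$ together with the solution of the word problem in $S(X,r)$ recalled at the beginning of this section: two words of length $m$ represent the same element of $S$ if and only if they lie in the same $\Dcal_m(r)$-orbit in $X^m$. Applying this with $m=3$ and $\Dcal=\Dcal_3(r)$, a set of normal forms for the degree-$3$ part of $\Acal$ is obtained by choosing one representative from each $\Dcal$-orbit in $X^3$, so $\dim_{\textbf{k}}\Acal_3$ equals the number of $\Dcal$-orbits in $X^3$.

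Next I would count these orbits using Notation \ref{notation2} and Proposition \ref{proposition1}. As in the proof of Proposition \ref{proposition2}, $X^3$ is the disjoint union of $\Delta_3$, the orbits $\Ocal_i$ of type \textbf{(ii)}, and the square-free orbits $\Ocal^{(j)}$, $1\leq j\leq q$. The set $\Delta_3$ contributes exactly $n$ one-element orbits $\{xxx\}$, and by Proposition \ref{proposition1}(\ref{prop12}) there are exactly $n(n-1)$ orbits of type \textbf{(ii)}. Hence the total number of $\Dcal$-orbits in $X^3$ is $n+n(n-1)+q=n^2+q$, and therefore $\dim_{\textbf{k}}\Acal_3=n^2+q$.

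Finally, the elementary identity $\binom{n+2}{3}-n^2=\frac{n^3+3n^2+2n-6n^2}{6}=\frac{n^3-3n^2+2n}{6}=\binom{n}{3}$ shows that $\dim_{\textbf{k}}\Acal_3=\binom{n+2}{3}$ holds if and only if $q=\binom{n}{3}$. By Proposition \ref{proposition2}(\ref{prop22}) the equality $q=\binom{n}{3}$ is equivalent to $(X,r)$ being a symmetric set, which completes the proof. There is no serious obstacle here: the only point deserving care is the first reduction, $\dim_{\textbf{k}}\Acal_3=\#\{\Dcal\text{-orbits in }X^3\}$, which relies on the normal-form description of $S(X,r)$; once that is granted, the remainder is the orbit count already carried out for Proposition \ref{proposition2} plus a one-line binomial computation.
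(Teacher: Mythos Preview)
Your proof is correct and follows essentially the same approach as the paper: identify $\dim_{\textbf{k}}\Acal_3$ with the number of $\Dcal$-orbits in $X^3$, count these orbits as $n+n(n-1)+q$ using Proposition~\ref{proposition1}, and invoke Proposition~\ref{proposition2}(\ref{prop22}). The paper's own proof is terser, stating only the identification of $\dim\Acal_3$ with the orbit count and leaving the arithmetic $n+n(n-1)+q=\binom{n+2}{3}\Longleftrightarrow q=\binom{n}{3}$ implicit (it appears explicitly just afterwards, in the discussion preceding Corollary~\ref{bounddimA3cor}); you have simply spelled this out.
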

\begin{proof}
 The distinct elements of the associated monoid $S=S(X,r),$ form a $\textbf{k}$- basis of the
monoidal algebra $\textbf{k} S \simeq \Acal(\textbf{k}, X,r)$. In
particular $\dim \Acal_3$ equals the number of distinct monomials of
length $3$ in $S$ which is exactly the number of $\Dcal$-orbits in
$X^3$.
\end{proof}

There is a  close relation between Yang-Baxter monoids and a
special class of \emph{Garside monoids}, see \cite{Chouraqui}, and
\cite{T10}. Garside monoids and groups were introduced by Garside,
\cite{garside}. The interested reader can find more information
and references in \cite{garside, dehornoy, KasselT}, etall.

In \cite{T10}, Definition 1.10. we introduce \emph{the regular
Garside monoids}. It follows from \cite{T10},the  Main Theorem
1.16,  that a finite quantum binomial set $(X,r)$ is a solution of
YBE \emph{iff} the associated monoid  $S = S(X, r)$ is a regular
Garside monoid. This together with Corollary \ref{cor1_prop2}
imply the following.

\begin{corollary}
\label{corGars_prop2} Let  $(X,r)$ be a finite quantum binomial
set. Let $S=S(X,r)$ be
the associated monoid, and  $S^3$-the set of distinct elements of
length $3$ in $S$. Suppose the cardinality of $S^3$ is
\[|S^3| = \binom{n+2}{3}.\] Then $S$ is a Garside monoid. Moreover,
$S$ is regular in the sense of \cite{T10}.
 \end{corollary}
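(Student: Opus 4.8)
The plan is to reduce the statement to Corollary \ref{cor1_prop2} together with the Main Theorem of \cite{T10}, so that essentially no new argument is needed. First I would make explicit the identification $|S^3| = \dim_{\textbf{k}}\Acal_3$. Since the defining relations $\Re_0(r)$ are binomial semigroup relations, one has the algebra isomorphism $\Acal(\textbf{k},X,r)\simeq \textbf{k}S(X,r)$, and $\dim_{\textbf{k}}\Acal_3$ counts the distinct monomials of length $3$ in $S$. By the solvability of the word problem in $S$ recalled above — two words of equal length $m$ are equal in $S$ exactly when they lie in the same orbit of $\Dcal_m(r)$ on $X^m$ — the set $S^3$ is in bijection with the set of $\Dcal$-orbits in $X^3$. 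Hence $|S^3| = \binom{n+2}{3}$ is literally the hypothesis $\dim \Acal_3 = \binom{n+2}{3}$ of Corollary \ref{cor1_prop2}.

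Next I would apply Corollary \ref{cor1_prop2} verbatim: under this equality $(X,r)$ is a symmetric set, i.e. $r$ is a nondegenerate involutive set-theoretic solution of the Yang-Baxter equation. I would not re-derive this; it rests on Proposition \ref{proposition2} and Lemma \ref{YBE=correctorderoforbits}, which convert the counting equality into the exact orbit-length pattern $|\Ocal_i|=3$ for orbits of type \textbf{(ii)} and $|\Ocal^{(j)}|=6$ for square-free orbits, and this pattern characterizes the braid relation holding on $X^3$, hence on all of $\langle X\rangle$ by the standard reduction for quadratic sets.

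Finally I would invoke \cite{T10}, Main Theorem 1.16: for a finite quantum binomial set, $(X,r)$ being a solution of YBE is equivalent to the associated monoid $S(X,r)$ being a \emph{regular} Garside monoid in the sense of \cite{T10}, Definition 1.10. Combining this with the conclusion of the previous paragraph yields at once that $S$ is a Garside monoid, and that it is regular in the sense of \cite{T10}, which is exactly the two assertions of the corollary.

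The only real obstacle is bookkeeping rather than mathematics: one must be careful that the quantity $|S^3|$ in the hypothesis genuinely equals the number of $\Dcal$-orbits in $X^3$. This uses that $r$ is square-free, nondegenerate and involutive, so that the relations of $S$ are length-preserving and the equality of words is governed by the finite groups $\Dcal_m(r)$ acting on $X^m$. Once that identification is fixed, the corollary is an immediate concatenation of Corollary \ref{cor1_prop2} with the cited characterization from \cite{T10}, and no further combinatorial input is required.
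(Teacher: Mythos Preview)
Your proposal is correct and follows exactly the paper's own approach: combine Corollary \ref{cor1_prop2} (the equality $|S^3|=\binom{n+2}{3}$ forces $(X,r)$ to be a symmetric set) with the Main Theorem 1.16 of \cite{T10} (a finite quantum binomial set is a solution of YBE iff $S(X,r)$ is a regular Garside monoid). One small slip: the groups $\Dcal_m(r)$ are defined in the paper as \emph{infinite} groups (for $m=3$ it is the infinite dihedral group), not finite ones; of course their orbits on the finite set $X^m$ are finite, which is all you need.
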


Assume now that $A$ is a quantum binomial algebra. We want to
estimate the
 dimension $\dim A_3$.
Let $(X,r)$ be the corresponding quantum binomial set, $S = S(X,r)$,
$\Acal= \Acal(\textbf{k}, X,r).$ We use Proposition
\ref{proposition2} to find an upper bound for the number of distinct
$\Dcal$-orbits in $X^3$, or equivalently, the order of  $S_3$, the
set of (distinct) elements of length $3$ in $S$. One has
 \[|S_3|= n + n(n-1) +q \leq
 n+n(n-1)+ \binom{n}{3}= \binom{n+2}{3}.\]
There is an isomorphism of vector spaces, $\Acal_3 \simeq Span S_3
$, so
\[\dim \Acal_3 = |S_3| \leq \binom{n+2}{3}.\] In the general case,
a quantum binomial algebra, satisfies $\dim A_3 \leq \dim \Acal_3,$
due to the coefficients $c_{xy}$ appearing in the set of relations.
We have proven the following corollary.
\begin{corollary}
\label{bounddimA3cor}
If $A$ is a quantum binomial algebra, then
\[\dim A_3 \leq \binom{n+2}{3},\quad \dim A^{!}_3 \leq \binom{n}{3}.\]
\end{corollary}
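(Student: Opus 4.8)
The plan is to reduce the bound on $\dim A_3$ to the $\Dcal$-orbit count already established, and then transfer it to $A^{!}_{3}$ by the standard quadratic-duality dimension identity. First I would pass to the quantum binomial set $(X,r)$ associated with the relations $\Re$, the monoid $S=S(X,r)$, and the monoid algebra $\Acal=\Acal(\textbf{k},X,r)\simeq\textbf{k}S$, and establish two facts: $\dim\Acal_3=\#\{\Dcal\text{-orbits in }X^3\}$, and $\dim A_3\le\dim\Acal_3$. The first is immediate from the solvability of the word problem in $S$, since the length-$3$ elements of $S$ form a $\textbf{k}$-basis of $\Acal_3$ and correspond bijectively to the $\Dcal$-orbits in $X^3$. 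For the second, I would note that the monomials of $X^3$ span $A_3$ and that two monomials lying in the same $\Dcal$-orbit have proportional images in $A_3$: it suffices to check this for one move $v=r^{ii+1}(u)$, and then either $u=v$ or $u$ and $v$ differ by applying a single defining relation $xy-c_{xy}y'x'\in\Re$ to two adjacent letters, so that $\bar u$ and $\bar v$ are nonzero scalar multiples of one another (the scalar a power of some $c_{xy}\in\textbf{k}^{\times}$). Hence one monomial per orbit already spans $A_3$.

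Next I would count the $\Dcal$-orbits in $X^3$. As recorded in the proof of Proposition \ref{proposition2}, they split into three disjoint families: the $n$ singletons $\{xxx\}$ making up $\Delta_3$; the orbits of type \textbf{(ii)}, of which there are exactly $n(n-1)$ by Proposition \ref{proposition1}(\ref{prop12}); and the square-free orbits, of which there are $q\le\binom{n}{3}$ by Proposition \ref{proposition2}(\ref{prop21}). Since $n+n(n-1)+\binom{n}{3}=n^2+\binom{n}{3}=\binom{n+2}{3}$, this gives
\[
\dim A_3\ \le\ \#\{\Dcal\text{-orbits in }X^3\}\ =\ n+n(n-1)+q\ \le\ \binom{n+2}{3}.
\]

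Finally I would deduce the bound on $A^{!}_{3}$ from quadratic duality. Because $A$ is $n$-generated quantum binomial it has exactly $\binom{n}{2}$ relations, so writing $V=Span_{\textbf{k}}X$ and $R\subset V^{\otimes2}$ for the relation space we have $\dim R=\binom{n}{2}$, $A_3=V^{\otimes3}/(R\otimes V+V\otimes R)$, and the quadratic-duality identity $\dim A^{!}_{3}=\dim\bigl((R\otimes V)\cap(V\otimes R)\bigr)$; inclusion–exclusion then yields $\dim A_3=n^3-2n\binom{n}{2}+\dim A^{!}_{3}=n^2+\dim A^{!}_{3}$, whence $\dim A^{!}_{3}=\dim A_3-n^2\le\binom{n+2}{3}-n^2=\binom{n}{3}$. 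The combinatorial substance is entirely contained in Propositions \ref{proposition1} and \ref{proposition2}; the only genuinely new step is the reduction in the first paragraph, and the only point requiring care there is that a chain of $\Dcal$-moves corresponds to multiplication by a \emph{nonzero} scalar in $A_3$ — which is exactly what the hypothesis $c_{xy}\in\textbf{k}^{\times}$ in the definition of a quantum binomial algebra guarantees. I therefore expect no real obstacle beyond this bookkeeping.
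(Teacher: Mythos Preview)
Your proposal is correct and follows essentially the same route as the paper: reduce to counting $\Dcal$-orbits in $X^3$ via $\dim A_3\le\dim\Acal_3=|S_3|$, bound $|S_3|$ by $n+n(n-1)+q\le\binom{n+2}{3}$ using Propositions~\ref{proposition1} and~\ref{proposition2}, and then obtain the $A^{!}_3$ bound from the standard identity $\dim A^{!}_3=(\dim A_1)^3-2(\dim A_1)(\dim A_2)+\dim A_3$ (which the paper invokes later, in Lemma~\ref{proposition33}). You supply more detail than the paper on why $\dim A_3\le\dim\Acal_3$---the paper simply says this holds ``due to the coefficients $c_{xy}$''---but the argument is the same.
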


\section{Quantum binomial algebras. Yang-Baxter equation and Artin-Schelter regularity}
\label{section_AS_YBE}
\begin{definition}
\label{frobeniusalgebra}\cite{Maninpreprint}, \cite{Maninbook} A
graded algebra $A=\bigoplus_{i\ge0}A_i$ is called
\emph{a Frobenius algebra of dimension $n$},
(or \emph{a Frobenius quantum space of dimension $n$})
if
\begin{enumerate}
\item[(a)]
$dim(A_n)=1$,  $A_i =0,$ for $i>n.$
\item[(b)]
For all $n\geq j \geq 0$) the multiplicative map
$m: A_j\otimes A_{n -j} \rightarrow A_n$
is a perfect duality (nondegenerate pairing).

A Frobenius algebra $A$ is called \emph{a quantum Grassmann algebra} if in addition
\item[(c)]
$dim_ \textbf{k}A_i= \binom{n}{i} , \;\text{for}\; 1 \leq i \leq
n.$ \end{enumerate}
\end{definition}

\begin{lemma}
\label{proposition33} Let $A  = \textbf{k}\langle X; \Re\rangle$  be
a quantum binomial algebra, $|X|=n$, $A^{!}$ its Koszul dual. Let
$(X, r), r=r(\Re)$ be the associated quantum binomial set (see
Definition \ref{associatedmapsdef}). Then each of the following
three conditions implies that $(X,r)$ is a symmetric set.
\begin{enumerate}
\item
\label{dimA3correct} \[\dim A_3= \binom{n+2}{3}.\]
\item
\label{nchose3_squarefreemonomials} \[\dim A^{!}_3= \binom{n}{3}.\]
\item
\label{N3correct}
 $X$ can be enumerated $X = \{x_1 \cdots, x_n\}$, so that the set of ordered monomials of length 3
\begin{equation}
\label{eqN3}
N_3 = \{x_{i_1}x_{i_2}x_{i_3}\mid 1 \leq i_1 \leq i_2 \leq i_3\leq n\}
\end{equation}
projects to a
$\textbf{k}$-basis of $A_3$
\end{enumerate}
\end{lemma}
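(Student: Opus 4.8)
The plan is to show that each of the three hypotheses forces the single numerical equality $\dim_{\textbf{k}}A_3=\binom{n+2}{3}$, and then to transport that equality from the quantum binomial \emph{algebra} $A$ to the monoid algebra $\Acal=\Acal(\textbf{k},X,r)$ of the associated quantum binomial \emph{set}, where Corollary \ref{cor1_prop2} applies verbatim.

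First I would dispose of conditions (3) and (2). For (3): the number of non-decreasing monomials $x_{i_1}x_{i_2}x_{i_3}$ with $1\le i_1\le i_2\le i_3\le n$ is $\binom{n+2}{3}$, so if $N_3$ projects onto a $\textbf{k}$-basis of $A_3$ then $\dim_{\textbf{k}}A_3=\binom{n+2}{3}$, i.e. (3) $\Rightarrow$ (1). For (2): recall that every $n$-generated quantum binomial algebra has exactly $\binom{n}{2}$ relations, so writing $V=\mathrm{Span}_{\textbf{k}}X$ and $R\subset V^{\otimes 2}$ for the span of the relations, one has $\dim R=\binom{n}{2}$, $\dim_{\textbf{k}}A_3=n^3-\dim(R\otimes V+V\otimes R)$, and $\dim_{\textbf{k}}A^{!}_3=\dim\bigl((R\otimes V)\cap(V\otimes R)\bigr)$ by quadratic duality. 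Inclusion--exclusion then gives
\[
\dim_{\textbf{k}}A_3-\dim_{\textbf{k}}A^{!}_3=n^3-2n\binom{n}{2}=n^2=\binom{n+2}{3}-\binom{n}{3},
\]
so $\dim_{\textbf{k}}A^{!}_3=\binom{n}{3}$ if and only if $\dim_{\textbf{k}}A_3=\binom{n+2}{3}$; in particular (2) $\Rightarrow$ (1).

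The heart of the argument is the implication (1) $\Rightarrow$ $(X,r)$ symmetric. Here I would exploit that each relation $xy-c_{xy}y'x'\in\Re$ asserts that $xy$ and $y'x'$ are nonzero scalar multiples of each other in $A$; consequently, applying $r^{12}$ or $r^{23}$ to a monomial of $X^3$ replaces it by a scalar multiple of itself in $A$ (with scalar $1$ when the relevant length-two factor occurs in no relation, as in Definition \ref{associatedmapsdef}). Iterating along a $\Dcal$-orbit, any two monomials of $X^3$ lying in the same $\Dcal$-orbit span a subspace of dimension at most $1$ in $A_3$, whence $\dim_{\textbf{k}}A_3$ is bounded above by the number of $\Dcal$-orbits in $X^3$. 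But the distinct length-three elements of $S(X,r)$ form a $\textbf{k}$-basis of $\Acal_3$ and are in bijection with the $\Dcal$-orbits, so that number equals $\dim_{\textbf{k}}\Acal_3$; and decomposing $X^3$ into $\Delta_3$ ($n$ one-element orbits), the $n(n-1)$ orbits of type \textbf{(ii)} (Proposition \ref{proposition1}), and the $q$ square-free orbits, Proposition \ref{proposition2}(1) yields $\dim_{\textbf{k}}\Acal_3=n+n(n-1)+q\le n+n(n-1)+\binom{n}{3}=\binom{n+2}{3}$. Assuming (1) we therefore obtain $\binom{n+2}{3}=\dim_{\textbf{k}}A_3\le\dim_{\textbf{k}}\Acal_3\le\binom{n+2}{3}$, forcing $\dim_{\textbf{k}}\Acal_3=\binom{n+2}{3}$, and Corollary \ref{cor1_prop2} then gives that $(X,r)$ is a symmetric set.

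I expect the delicate point to be the inequality $\dim_{\textbf{k}}A_3\le\dim_{\textbf{k}}\Acal_3$ --- namely, that replacing the coefficient-free relations $\Re_0(r)$ by the relations $\Re$ with coefficients $c_{xy}$ can only further collapse degree-three monomials, never separate them. This is precisely where one uses that $r$ is nondegenerate and involutive, so that $r^{12},r^{23}$ genuinely permute the monomials of $X^3$, together with the scalar-multiple bookkeeping above; the remaining combinatorial content (the orbit count and the characterization of symmetry by $\dim\Acal_3$) is already packaged in Propositions \ref{proposition1} and \ref{proposition2} and Corollary \ref{cor1_prop2}.
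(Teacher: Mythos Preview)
Your proposal is correct and follows essentially the same route as the paper: reduce conditions (2) and (3) to (1), then squeeze $\binom{n+2}{3}=\dim A_3\le\dim\Acal_3\le\binom{n+2}{3}$ and invoke Corollary~\ref{cor1_prop2}/Proposition~\ref{proposition2}. The only difference is cosmetic: you derive the identity $\dim A_3-\dim A^{!}_3=n^2$ by inclusion--exclusion on $R\otimes V+V\otimes R$, while the paper quotes the equivalent formula from \cite{PP}, and you spell out the orbit-wise scalar-multiple argument for $\dim A_3\le\dim\Acal_3$, which the paper simply asserts.
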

\begin{proof}
In the  usual notation, $S=S(X,r)$ and $\Acal =\Acal(X,r)$ denote
respectively be the associated monoid and quadratic algebra. We know
that, in general $\dim A_3 \leq \dim \Acal_3$.

  Assume
(\ref{prop22}) holds. Consider the relations
\[
\binom{n+2}{3}=\dim A_3 \leq \dim \Acal_3 \leq \binom{n+2}{3},
\]
where the right-hand side inequality follows from Corollary
\ref{cor1_prop2}. This implies $\dim\Acal_3 = \binom{n+2}{3}$, or
equivalently, $|S_3|=\binom{n+2}{3}$ and therefore there are exactly
$q=\binom{n}{3}$ square-free $\Dcal$-orbits in $X^3$. Then
Proposition \ref{proposition2} (\ref{prop22}) implies that $(X,r)$
is a symmetric set, which verifies
 (\ref{dimA3correct})$ \Longrightarrow $ (\ref{N3correct}).
The converse (\ref{N3correct}) $ \Longrightarrow $ (\ref{dimA3correct}) is straightforward.
Finally, one has
\[\dim A_3=
\binom{n+2}{3} \Longleftrightarrow \dim A^{!}_3=
\binom{n}{3},\]
This can be proved directly using the $\Dcal$-orbits in $X^3$. It is also
straightforward from the following formula
for quadratic algebras, see \cite{PP}, p 85.
\[
\dim A^{!}_3=(\dim A_1)^3-2(\dim A_1)(\dim A_2)+ \dim A_3.
\]
\end{proof}

\begin{lemma}
\label{skewtyperel&dimA3lemma}
Let $A $ be a quadratic algebra with relations of skew-polynomal type, let $A^{!}$ be its Koszul dual.
Then the following  conditions are equivallent.
\begin{enumerate}
\item
\label{A3}
\[\dim A_3= \binom{n+2}{3}.\]
\item
\label{A!3}
\[\dim A^{!}_3= \binom{n}{3}.\]
\item
\label{A}
The set of defining relations $\Re$ for $A$ is a Gr\"{o}bner basis, so  $A$ is a skew polynomial ring,
and therefore a PBW algebra.
\item
\label{A!}
The set of defining relations $\Re^{\bot}$ for $A^{!}$ is a Gr\"{o}bner basis, so  $A^{!}$ is a
a PBW algebra with PBW generators  $x_1, \cdots, x_n$.
\end{enumerate}
\end{lemma}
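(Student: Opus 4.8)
The plan is to prove the three equivalences $(1)\Leftrightarrow(3)$, $(1)\Leftrightarrow(2)$ and $(3)\Leftrightarrow(4)$. Fix the deg-lex order on $\langle X\rangle$ induced by $x_1<\cdots<x_n$. By hypothesis the $\binom n2$ defining binomials have the shape $x_jx_i-c_{ij}x_{i'}x_{j'}$ with $i<j$, and condition \ref{binomialringdef}(b), which says $j>i'$, forces $LM(x_jx_i-c_{ij}x_{i'}x_{j'})=x_jx_i$; hence the relations are linearly independent with the $\binom n2$ distinct leading monomials $\{x_jx_i\mid i<j\}$. It follows at once that $\dim A_2=n^2-\binom n2=\binom{n+1}2$, and since every monomial of $X^3$ whose index sequence is not weakly increasing contains a leading monomial, the $\binom{n+2}3$ weakly increasing monomials span $A_3$, so $\dim A_3\le\binom{n+2}3$.

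For $(1)\Leftrightarrow(3)$ I would use Bergman's Diamond Lemma \cite{Bergman}. The relations being quadratic, there are no inclusion ambiguities, and the overlap ambiguities are exactly the monomials $x_kx_jx_i$ with $x_kx_j$ and $x_jx_i$ both leading monomials, i.e. with $k>j>i$ — precisely the triples of condition \ref{binomialringdef}(d). Let $\textbf{G}$ be the reduced Gröbner basis of $(\Re)$; its degree-$3$ part has pairwise distinct leading monomials, each a weakly increasing monomial (being normal modulo $\Re$), and the normal words of degree $3$ are the weakly increasing monomials not among them, so $\dim A_3=\binom{n+2}3-(\text{number of degree-}3\text{ elements of }\textbf{G})$. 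Hence $\dim A_3=\binom{n+2}3$ iff $\textbf{G}$ has no degree-$3$ elements iff every ambiguity $x_kx_jx_i$, $k>j>i$, resolves — which for quadratic relations means $\Re$ is itself a Gröbner basis, i.e. condition \ref{binomialringdef}(d) holds. Together with the standing hypothesis that $\Re$ is of skew-polynomial type, this is exactly the statement that $A$ is a binomial skew-polynomial ring, hence a PBW algebra with PBW generators $x_1,\dots,x_n$; this is $(3)$.

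For $(1)\Leftrightarrow(2)$ I would quote the dimension identity for quadratic algebras $\dim A^{!}_3=(\dim A_1)^3-2(\dim A_1)(\dim A_2)+\dim A_3$ (see \cite{PP}, as already used in Lemma \ref{proposition33}); inserting $\dim A_1=n$ and $\dim A_2=\binom{n+1}2$ gives $\dim A^{!}_3=\dim A_3-n^2$, and since $\binom{n+2}3-n^2=\binom n3$ the equivalence is immediate. Alternatively one reads it off from the $\Dcal$-orbit counting of Section \ref{section_combinatorics}.

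For $(3)\Leftrightarrow(4)$ I would invoke the self-duality of the PBW property (Priddy \cite{priddy}; see also \cite{PP}): if $A$ is a PBW algebra with PBW generators $x_1<\cdots<x_n$ then so is $A^{!}$. To pin down that its PBW generators are again $x_1,\dots,x_n$ one computes the orthogonal relations: $x_i^{*}\otimes x_i^{*}\in\Re^{\bot}$ for every $i$ and the remaining $\binom n2$ dual relations have leading monomial $x_jx_i$ with $i<j$, so the leading monomials of $\Re^{\bot}$ are $\{x_jx_i\mid i\le j\}$, whence the normal words of degree $3$ for $A^{!}$ are the strictly increasing monomials and $\dim A^{!}_3\le\binom n3$. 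Now the bookkeeping of the second paragraph applied to $\Re^{\bot}$ shows $\dim A^{!}_3=\binom n3$ iff all degree-$3$ ambiguities of $\Re^{\bot}$ resolve iff $\Re^{\bot}$ is a Gröbner basis; combined with $(1)\Leftrightarrow(2)$ this closes the loop, the converse $(4)\Rightarrow(3)$ being symmetric via $(A^{!})^{!}\cong A$. The one step I expect to require genuine care is this last computation — identifying the leading monomials of $\Re^{\bot}$ and checking that the order $x_1<\cdots<x_n$, rather than its reverse, is the one for which $A^{!}$ is PBW; everything else is a dimension count and routine applications of the Diamond Lemma, the only algebra-specific input being the shape of the leading monomials forced by \ref{binomialringdef}(b).
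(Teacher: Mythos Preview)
Your proposal is correct and follows essentially the same route as the paper's sketch: the equivalence $(1)\Leftrightarrow(2)$ via the dimension identity from \cite{PP}, and $(1)\Leftrightarrow(3)$ via the Diamond Lemma by counting normal monomials of degree~$3$. The only minor organizational difference is that the paper closes the loop by arguing $(2)\Leftrightarrow(4)$ ``analogously to $(1)\Leftrightarrow(3)$'' (i.e.\ running the Diamond Lemma directly on $\Re^{\bot}$), whereas you go through $(3)\Leftrightarrow(4)$ and additionally invoke Priddy's self-duality of the PBW property; your explicit identification of the leading monomials of $\Re^{\bot}$ as $\{x_jx_i\mid i\le j\}$ is exactly what makes the ``analogous'' argument in the paper work, and your caveat about checking the ordering is well-placed but, as you compute, the same order $x_1<\cdots<x_n$ does the job.
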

\begin{sketchofproof}
As we already mentioned $(\ref{A3})\Longleftrightarrow (\ref{A!3}).$

$(\ref{A3})\Longleftrightarrow \ref{A}).$ Assume condition
(\ref{A3}) holds. The set of monomials of length $3$, which are
normal mod the ideal $(\Re)$ form a $\textbf{k}$-basis of $A_3$. The
skew-polynomial shape of the relations implies that each  monomial
$u$ which is normal mod the ideal $(\Re)$ is in the set of ordered
monomials
\[N_3 = \{ x_{i_1}x_{i_2}x_{i_3}\mid 1 \leq i_1\leq i_2 \leq i_3\leq n\}.\]
There are equalities
\[
\mid N_3\mid = \binom{n+2}{3} =  \dim A_3,
\]
hence all ordered monomials of length $3$ are normal mod  $(\Re)$. It follows then that all ambiguities $x_kx_jx_i, 1 \leq i < j <k \leq n$ are resolvable, and
 by Bergman's Diamond lemma \cite{Bergman}, $\Re$ is a Gr\"obner basis of the ideal $(\Re).$
 Thus $A$ is a PBW algebra, more precisely, $A$ is a binomial skew-polynomial ring. This gives $(\ref{A3})\Longrightarrow (\ref{A}).$
 The converse implication $(\ref{A3})\Longleftarrow (\ref{A})$ is clear.

$(\ref{A!3})\Longleftarrow (\ref{A!})$ is clear.

 $(\ref{A!3})\Longrightarrow (\ref{A!})$ is analogous to $(\ref{A3})\Longrightarrow \ref{A}).$
\end{sketchofproof}

\begin{theorem}
\label{theorem2}
Let $A  = \textbf{k}\langle X  \rangle /(\Re)$ be a quantum binomial algebra, $|X|= n$, and let
 $\Re$ be the associated automorphism $R=R(\Re): V^{\otimes 2}
\longrightarrow V^{\otimes 2},$ see Definition \ref{associatedmapsdef}.
Then
the following three conditions are equivalent
\begin{enumerate}
\item
\label{theorem21}
$R = R(\Re)$ is a solution of the Yang-Baxter equation.
\item
\label{theorem22}
$A$ is a binomial skew-polynomal ring
\item
\label{theorem23}
\[\dim_{\textbf{k}}A_3 = \binom{n+2}{3}.\]
\end{enumerate}
\end{theorem}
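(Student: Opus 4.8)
The plan is to deduce the equivalence of $(\ref{theorem21})$, $(\ref{theorem22})$ and $(\ref{theorem23})$ from three ingredients: Fact~\ref{fact1}, which already yields $(\ref{theorem21})\Leftrightarrow(\ref{theorem22})$ verbatim (these are its conditions $(\ref{theoremB3})$ and $(\ref{theoremB2})$); the routine implication $(\ref{theorem22})\Rightarrow(\ref{theorem23})$; and the substantive implication $(\ref{theorem23})\Rightarrow(\ref{theorem22})$, for which the combinatorics developed in Sections~\ref{section_growthgldim} and~\ref{section_combinatorics} is needed. Chaining these three gives the equivalence of all three conditions.

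For $(\ref{theorem22})\Rightarrow(\ref{theorem23})$: if $A$ is a binomial skew-polynomial ring then, by the reformulation of Definition~\ref{binomialringdef}(d) through \cite{Bergman} recalled just after that definition, the set of ordered monomials $\Ncal_0$ is a $\textbf{k}$-basis of $A$; counting ordered monomials of length three gives $\dim_{\textbf{k}}A_3=\binom{n+2}{3}$. (Equivalently, a binomial skew-polynomial ring is a PBW algebra with $\binom{n}{2}$ relations and finite global dimension, so Theorem~\ref{theorem1} gives $H_A(z)=1/(1-z)^n$, whose $z^3$-coefficient is $\binom{n+2}{3}$.)

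The heart of the theorem is $(\ref{theorem23})\Rightarrow(\ref{theorem22})$. Assume $\dim_{\textbf{k}}A_3=\binom{n+2}{3}$ and let $(X,r)$ with $r=r(\Re)$ be the associated quantum binomial set. By Lemma~\ref{proposition33} the set $(X,r)$ is then symmetric, so $r$ is a set-theoretic solution of the Yang-Baxter equation. Now pass to the coefficient-free algebra $\Acal=\Acal(\textbf{k},X,r)$ with defining relations $\Re_0(r)$: this is again a quantum binomial algebra, and its associated automorphism (Definition~\ref{associatedmapsdef}) is the linear extension of $r$, hence a solution of the Yang-Baxter equation, so $\Acal$ is a Yang-Baxter algebra. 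By the implication $(\ref{theoremB3})\Rightarrow(\ref{theoremB2})$ of Fact~\ref{fact1}, there is an enumeration $x_1\prec\cdots\prec x_n$ of $X$ with respect to which $\Acal$ is a binomial skew-polynomial ring; in particular its relations are of skew-polynomial type in the sense of Definition~\ref{binomialringdef}(a)--(c). Since these conditions concern only the combinatorial pairing carried by $r$ (which descending monomial $x_jx_i$ is matched with which ascending monomial $x_{i'}x_{j'}$) and the chosen enumeration, and not the values of the nonzero coefficients, the relations $\Re$ of $A$ are also of skew-polynomial type with respect to this same enumeration. Lemma~\ref{skewtyperel&dimA3lemma} then applies to $A$: the hypothesis $\dim_{\textbf{k}}A_3=\binom{n+2}{3}$ is precisely its condition $(\ref{A3})$, so $\Re$ is a Gr\"obner basis, hence $A$ is a binomial skew-polynomial ring, which is $(\ref{theorem22})$.

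I expect the delicate point to be exactly the transfer step above: one must verify that the skew-polynomial normalization ($j>i'$ and $i'<j'$) and the covering condition~\ref{binomialringdef}(c) are inherited by $A$ from $\Acal$ purely at the level of the datum $r$ and the enumeration produced by Fact~\ref{fact1}, so that Lemma~\ref{skewtyperel&dimA3lemma} is genuinely applicable to $A$. Granting this, the coefficients $c_{xy}$ need no separate discussion: they are automatically compatible because $\dim_{\textbf{k}}A_3$ has been pushed up to the maximal value $\binom{n+2}{3}=\dim_{\textbf{k}}\Acal_3$ (recall $\dim_{\textbf{k}}A_3\le\dim_{\textbf{k}}\Acal_3\le\binom{n+2}{3}$ by Corollaries~\ref{cor1_prop2} and~\ref{bounddimA3cor}), and Lemma~\ref{skewtyperel&dimA3lemma} turns this maximality into resolvability of all the ambiguities $x_kx_jx_i$, i.e.\ into the Gr\"obner basis property.
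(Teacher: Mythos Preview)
Your proposal is correct and follows essentially the same route as the paper: $(\ref{theorem21})\Leftrightarrow(\ref{theorem22})$ via Fact~\ref{fact1} (the paper cites \cite{T04s}, Theorem~B, which is the same result), the easy $(\ref{theorem22})\Rightarrow(\ref{theorem23})$ by counting ordered monomials, and the main step $(\ref{theorem23})\Rightarrow(\ref{theorem22})$ by first establishing that $(X,r)$ is symmetric, then producing an enumeration making the relations of skew-polynomial type, and finally invoking Lemma~\ref{skewtyperel&dimA3lemma}. The only cosmetic differences are that the paper reaches symmetry via Corollary~\ref{cor1_prop2} rather than the packaged Lemma~\ref{proposition33}, and obtains the good enumeration by citing \cite{T04}, Theorem~2.26 directly rather than by applying Fact~\ref{fact1} to $\Acal$; your explicit discussion of the ``transfer step'' (that conditions (a)--(c) of Definition~\ref{binomialringdef} depend only on $r$ and the enumeration) is a point the paper leaves implicit.
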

\begin{proof}
We start with the implication
(\ref{theorem23}) $\Longrightarrow$ (\ref{theorem22}).

Assume that $\dim_{\textbf{k}}A_3 = \binom{n+2}{3}$. Consider the
corresponding quadratic set $(X,r)$ and the monoidal algebra $\Acal
= \Acal(\textbf{k}, X,r)$. As before we conclude that
$\dim_{\textbf{k}}\Acal_3 = \binom{n+2}{3}$ and therefore, by
Corollary \ref{cor1_prop2} $(X,r)$ is symmetric set.

By  \cite{T04} Theorem 2.26 there exists an ordering on $X$, $X=
\{x_1,x_2,\cdots,x_n\}$ so that the algebra $\Acal$ is a
skew-polynomial ring and therefore a PBW algebra, with PBW
generators $x_1,x_2,\cdots,x_n$. It follows then that the
relations $\Re$ of $A$ are relations of skew-polynomial type, that
is condition (a), (b) and (c) of Definition \ref{binomialringdef}
are satisfied.

By assumption $\dim_{\textbf{k}}A_3 = \binom{n+2}{3}$ so Lemma \ref{skewtyperel&dimA3lemma} implies that
the set of relations $\Re$ is a Gr\"{o}bner basis ($A$ is PBW)
and therefore $A$ is a binomial skew polynomial ring. This verifies (\ref{theorem23}) $\Longrightarrow$ (\ref{theorem22}).

It follows from Definition
\ref{binomialringdef} that  if $A$ is a binomial skew-polynomial ring then the set of monomials
\[\Ncal = \{x_{i_1}x_{i_2}x_{i_3}\mid i_1 \leq x_{i_2}\leq x_{i_3}\}\]
is a $\textbf{k}$-basis of $A_3$, so $\dim_{\textbf{k}}A_3 =
\binom{n+2}{3}$, hence (\ref{theorem22}) $\Longrightarrow$
(\ref{theorem23}).

The equivalence  (\ref{theorem21}) $\Longleftrightarrow$ (\ref{theorem22}) is proven in  \cite{T04s}, Theorem B.
\end{proof}

\begin{prooftheorem3}

The equivalence of conditions
(\ref{theorem34}),  (\ref{theorem35}) and (\ref{theorem36}) follows from Theorem \ref{theorem2}.

The implication (\ref{theorem37}) $\Longrightarrow$ (\ref{theorem36}) is clear.
The converse follows from (\ref{theorem36}) $\Longrightarrow$ (\ref{theorem35}) $\Longrightarrow$ (\ref{theorem37}).

It is straightforward that each binomial skew polynomial ring $A$ is
Koszul and satisfies  (\ref{theorem31})  and (\ref{theorem32}). It
is proven in \cite{T04s} that the Koszul dual $A^{!}$ of a binomial
skew polynomial ring is a quantum Grassman algebra. Thus
(\ref{theorem35}) $\Longrightarrow$ (\ref{theorem38}).

Clearly, (\ref{theorem38}) $\Longrightarrow$ (\ref{theorem36}), and
therefore (\ref{theorem35}) $\Longleftrightarrow$ (\ref{theorem38}).

It is also known that a Koszul algebra $A$ is Gorenstein \emph{iff} its dual $A^{!}$ is Frobenius.
It follows then that  (\ref{theorem35}) $\Longrightarrow$ (\ref{theorem33}).

The result that every binomial skew polynomial ring is AS regular follows also
from the earlier
work  \cite{TM}.

Finally we will show (\ref{theorem31}) $\Longrightarrow$ (\ref{theorem35}) and (\ref{theorem32}) $\Longrightarrow$ (\ref{theorem35}).

We know that a quantum binomial algebra $A$ has exactly $\binom{n}{2}$ relations.
 Assume now that $A$ is a PBW algebra. Then its set of obstructions $\textbf{W}$
 has order
 $|\textbf{W}|=|\Re| = \binom{n}{2}$. Consider now the corresponding monomial algebra
 $A^0 = \textbf{k}\langle X \rangle/ (\textbf{W})$. Each of the conditions  (\ref{theorem31}) and (\ref{theorem32}) is satisfied by $A$ \emph{iff}  it is true for $A^0$.

 Assume first that $A$ satisfies (\ref{theorem31}). Then the monomial algebra $A^0$ satisfies condition
(\ref{theor12a}) of  Theorem \ref{monomialalgebrath} and by the same theorem
 the Hilbert series of $A^0$ satisfies (\ref{theorem37}).
 The algebras $A$ and $A^0$ have the same Hibert series, and therefore
 \[(\ref{theorem31})\Longrightarrow (\ref{theorem37}) \Longrightarrow
 (\ref{theorem35}).\]

Similarly, if $A$ satisfies (\ref{theorem32}), then the monomial
algebra $A^0$ satisfies condition (\ref{theor12b}) of  Theorem
\ref{monomialalgebrath}. (Note that the relations are square free,
so \[\textbf{W}\bigcap\diag X^2 = \emptyset.\] Analogous to the
previous implication we conclude
(\ref{theorem32})$\Longrightarrow$ (\ref{theorem37})
$\Longrightarrow$ (\ref{theorem35}). The equivalence of the
conditions (\ref{theorem31}) $\cdots$ (\ref{theorem38}) has been
verified.

Each of these conditions imply that $A$ is a binomial skew-polynomial ring and therefore
it is a Noetherian domain, see \cite{TM}, or Fact \ref{fact1}.
\end{prooftheorem3}

{\bf Acknowledgments}.
I express my gratitude to Mike Artin, who inspired my research in this area, for his
encouragement and moral support through the years.
This paper was written during my visit as a Fellow of the
Abdus Salam International Centre for Theoretical Physics (ICTP),
Trieste, Summer  2010. It is my pleasant duty to thank  Ramadas Ramakrishnan and the
Mathematics group of ICTP for inviting me and for our valuable and stimulating discussions. I thank ICTP for
the continuous support and for the inspiring and creative
atmosphere.

\end{document}